\documentclass[letterpaper,journal,final]{IEEEtran}  

\usepackage{tabularx}
\usepackage[ruled,linesnumbered]{algorithm2e}
\usepackage{comment}
\usepackage{derivative}
\usepackage{stackengine}
\usepackage{amsmath}
\usepackage{mathtools}
\usepackage{autobreak}
\usepackage{chemformula}
\usepackage{amssymb}
\usepackage{graphicx}
\usepackage[english]{babel}
\usepackage[numbers,sort&compress]{natbib}

\newtheorem{thm}{Theorem}

\newtheorem{definition}{Definition}

\newtheorem{cor}{Corollary}

\newtheorem{assum}{Assumption}
\newtheorem{prop}{Proposition}
\newtheorem{fact}{Fact}

\DeclareMathOperator{\col}{col}

\DeclareMathOperator{\diag}{diag}
\DeclareMathOperator{\tr}{tr}
\newcommand{\R}{\ensuremath{\mathbb R}}

\newcommand{\eucnorm}[1]{\ensuremath{\vert #1 \vert}}
\newcommand{\specnorm}[1]{\ensuremath{\Vert #1 \Vert}}

\def\qedp{\hspace*{\fill}~{\tiny $\blacksquare$}} 
\def\qeds{\hspace*{\fill}~{\tiny $\Box$}}

\title{Semi-definite programs for online control of nonlinear systems with stability guarantees} 

\author{Xiaoyan Dai  
\thanks{Xiaoyan Dai is with the Department of Electrical Engineering, Tsinghua University, 100084 Beijing, China. Email: {\tt\small daixiaoyan@tsinghua.edu.cn.} 
}}

\begin{document}
\maketitle

\begin{abstract}
This paper develops a semidefinite-programming-based method for online feedback control of nonlinear systems using a state-dependent representation. We formulate sequences of time-varying SDPs whose optimal solutions jointly yield a stabilizing feedback controller and a Lyapunov certificate satisfying stability conditions and quadratic performance specifications. We further establish compact conditions certifying recursive feasibility of the resulting SDP sequences and derive estimates of the region of attraction. Numerical examples on representative nonlinear systems illustrate the flexibility and effectiveness of the proposed method. 
\end{abstract}

\begin{IEEEkeywords}
optimization-based control, nonlinear systems, online control, adaptation and control, recursive feasibility. 
\end{IEEEkeywords}

\section{Introduction}

Computational tools have been extensively used for control design, among which we focus on optimization-based methods. These methods formulate control design problems as optimization programs whose decision variables are control inputs or feedback control gains that map system measurements to control inputs. 
The optimization programs incorporate the plant dynamics 
and performance specifications as (dynamic) constraints and objective functions, respectively. 

The optimization-based control design has been prevalent, with problems ranging from optimal \cite{numericalh2}, predictive \cite{rawlings2002mpctutorial}, safe \cite{wabersich2018safelinear}, and distributed control \cite{dorfler2014sparsity} to data-driven control \cite{deformulas}. 
The corresponding formulations are also varied, ranging from linear matrix inequalities (LMIs) \cite{ScherLMIoutputfeedback} and sum of squares \cite{prajna2004nonlinear} to the mixed integer quadratic programming \cite{bemporad1999hybridcontrol}. 
Among these, LMI-based formulations are particularly attractive for their computational efficiency and flexibility. 
We refer the reader to \cite{boyd1994lmibook} and \cite{scherer2021linear} for LMI-based formulations of diverse control design problems for linear systems. 
The control-oriented LMIs typically incorporate stability guarantees and performance specifications for the \emph{closed-loop} system behavior. As a result, the solutions provide \emph{certifiable} controllers.

This motivates us to propose novel LMI-based formulations for nonlinear control systems, a much more demanding context.  
Specifically, we focus on the problem of stabilizing input-affine nonlinear systems 
while minimizing a quadratic cost on state deviation and control effort.  
The goal is to develop LMIs that shape the closed-loop behaviors to be certifiably stable under performance considerations. The solutions result in provably correct control that stabilizes input-affine nonlinear systems around the desired equilibrium with relatively less control energy. 

\emph{Related work.} 
Nonlinear matrix inequalities (NLMIs) have been proposed for $\mathcal{H}_\infty$ control of 
a class of nonlinear systems in a state dependent representation \cite{lu1995nlmi}. 
Note that the NLMIs therein are in fact \emph{state dependent} matrix inequalities. 
Meanwhile, the state dependent representation has been widely used in nonlinear optimal control, where algebraic Riccati equations are solved along the trajectory, as a practical alternative for solving Hamiltonian-Jacobi-Bellman equations, see \cite{pearson1962approximation, beeler2004nasa, cloutier1996sdretheo, dutka2005optimized, sassanoTAC2012, kaiser2021sdrekoopman}. 
We further mention that based on the state dependent representation,  \cite{tahirovic2022linearlike} performs nonlinear optimal control via policy iteration. 
More recently, using the state dependent representation, \cite{dai2021sdre} considers a data-driven online control setting where a data-dependent semidefinite program is solved along the trajectory and, if feasible, renders the origin of the closed-loop system asymptotically stable. 

Despite the progress, see the applications of state dependent Riccati equation (SDRE)-based control across industrial practices \cite{ccimen2008sdresurvey}, a major challenge encountered in this line of inquiry is \emph{certifying} the stability of the system in closed-loop. 
In addition, recursive feasibility is vital as it determines whether the control scheme can generate a feedback gain at each time step. Nevertheless, the investigation in this respect appears to be sparse.

\emph{Contribution.} 
This paper makes two main contributions. First, we develop semidefinite-programming-based methods for online feedback control of nonlinear systems using state-dependent representations. In particular, we formulate sequences of time-varying SDPs whose optimal solutions jointly yield feedback controllers and Lyapunov certificates satisfying stability conditions and quadratic performance specifications. Second, we establish compact conditions certifying recursive feasibility of the resulting SDP sequences and derive computable estimates of the region of attraction, which characterize admissible initialization sets for the closed-loop guarantees. 

More specifically, we first construct a sequence of state-dependent SDPs for scalar input-affine nonlinear systems whose solutions yield a time-varying controller. We show that the closed-loop system is geometrically stable when initialized within a derived RoA estimate. We then develop a related regularized SDP framework for higher-dimensional systems, whose solutions simultaneously provide stabilizing controllers and Lyapunov certificates. Finally, we derive compact conditions certifying recursive feasibility of the proposed SDP sequences and demonstrate the effectiveness of the methods on representative nonlinear systems, including control of a single-machine-infinite-bus power system, stabilization of a jet engine compressor, and an inverted pendulum with a state-dependent input vector field.

{\it{Outline.}} The structure of the paper is specified as follows. Section \ref{sec-pre-pro-form} provides preliminaries on the SDP formulation of the LQR problem and introduces the problem under consideration. Then we consider the case of scalars in Section \ref{scalar-sec}. In Section \ref{higher-dimen-sec}, we investigate the higher-dimensional case. 
In Section \ref{case-sec}, we demonstrate the results on three example systems. Finally, we summarize the paper and draw the conclusions in Section \ref{conclude-sec}.

\section{Preliminaries and problem statement}\label{sec-pre-pro-form}

\subsection{Notation}
We denote the set of real numbers and natural numbers including zero by $\R$ and $\mathbb N_0$, respectively. 
We use $\succ (\succeq)$ and $\prec (\preceq)$ to denote the positive and negative (semi)definiteness of the involved matrices, respectively. 
The $2$-norm of a vector $x$ is denoted by $\eucnorm{x}$ and the induced $2$-norm of a matrix $A$ is denoted by $\specnorm{A}$. The notation $\lambda_m(Q)$ and $\lambda_M(Q)$ are the minimum and maximum eigenvalues of a square matrix $Q$, respectively. We denote the set of $n \times n$ real-valued symmetric matrices by $\mathbb S^{n \times n}$. We use $M^\top$ to denote the transpose of $M$. For matrices $A=A^\top$, $B$, $C=C^\top$, we abbreviate the symmetric matrix 
$\left[\begin{smallmatrix} 
 A&B \\ B^\top & C \end{smallmatrix}\right]$ as $ 
\left[\begin{smallmatrix} 
A & B \\ * & C 
\end{smallmatrix} \right]
$. For matrices $M$, $N$, and $O$ of compatible dimensions, we abbreviate $MN O(MN)^\top$ to $MN \cdot O(\star)^\top$, where $\star$ clarifies unambiguously that $MN$ is the term to be transformed.

\subsection{SDP formulation for discrete-time LQR problem}\label{Sec:pre:SDP} 
 
Consider a discrete-time linear time invariant system 
\begin{align*}
    x_{t+1} = A x_t + B u_t, \quad x(0) = x_0, 
\end{align*}
with $x \in \R^n$ the state, $u \in \R^m$ the control input, $x_0 \in \R^n$ the initial condition, $t \in \mathbb N_0$; $(A,B)$ is stabilizable. 
The linear quadratic regulator (LQR) problem is to design the controller 
\begin{align*}
    u = K x 
\end{align*}
such that the origin of the closed-loop system 
\begin{align*}
    x_{t+1} = (A+BK)x_t, \quad x(0) = x_0
\end{align*}
is asymptotically stable while minimizing the cost function 
\begin{align} \label{cost-def-lti-lq}
    J(x_0, u) \coloneqq \sum_{t = 0}^{\infty} x_t^\top Q x_t + 
    u_t^\top R u_t 
\end{align}
with weighting matrices $Q \succ 0$ and $R \succ 0$. 
It is well known that the LQR control gain $K$ is {\em{unique}} \cite[Sec. 6.4]{chen1995optimal} and can be computed as 
\begin{align}\label{lti-lq}
     K := -(R + B^\top P_r B)^{-1} B^\top P_r A 
\end{align}
where $P_r$ is the unique positive definite solution to the discrete-time Riccati equation \cite[Thm. 6.3.2]{chen1995optimal}
\begin{align}\label{riccati-lti-lq}
{\small    A^\top P_r A - P_r 
  - A^\top P_r B
  (R + B^\top P_r B)^{-1} B^\top P_r A + Q = 0 . }
\end{align}

On the other hand, finding the LQR controller $K$ can also be viewed as an $\mathcal{H}_2$ problem. 
To see this, we consider the performance output $y := \col(Q^{1/2} x, R^{1/2} u)$  
and the system 
\begin{align}\left[ \begin{array}{c}x_{t+1} \\ y_t \end{array} \right] = \left[ \begin{array}{c|c} A+BK & I \\ \hline \left[ \begin{array}{c} Q^{1/2} \\ R^{1/2} K \end{array} \right] & 0 \end{array} \right] \left[ \begin{array}{c} x_t \\ d_t \end{array} \right], \label{h2phisys} \end{align}
where $d = \delta_d x_0$ with 
$\delta_d$ the discrete-time unit pulse applied at time $t = 0$ \cite[Sec. 6.4]{chen1995optimal}. 
An SDP has been built \cite[eq. 9]{deformulas}
\begin{subequations}\label{lmi-lq}
\begin{align}
&\min\limits_{K, P, L, \gamma} \quad \gamma
\\&\text{s.t.} \notag \\
& (A+BK) P (A+BK)^\top - P + I \preceq 0 \label{lti-stab1}
\\ 
& P \succeq I \label{lti-stab2} \\ & L - K P K^\top \succeq 0 \label{lti-perf1} \\
& \gamma \geq \tr (Q P) + \tr (R L) \label{lti-perf2}
\end{align}\end{subequations}%
whose optimal solution $K$ minimizes the $\mathcal{H}_2$ norm of the system in \eqref{h2phisys}. Moreover, the optimal $K$ coincides with the LQR control gain. 
We summarize below the properties of the SDP-based formulation \eqref{lmi-lq} in relation to the Riccati-based formulation in \eqref{lti-lq} and \eqref{riccati-lti-lq}. 

\begin{prop}\label{riccati-sdp-lq-property}
    The SDP \eqref{lmi-lq} has a unique optimal solution $(K, P, L, \gamma)$ where $K$ coincides with the one defined in \eqref{lti-lq} and $\gamma$ equals to $\tr(P_r)$. \qeds
\end{prop}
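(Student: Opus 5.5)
The plan is to reduce the SDP \eqref{lmi-lq} to the classical comparison between the closed-loop cost of an arbitrary stabilizing gain and the Riccati solution $P_r$. Throughout, write $A_K := A+BK$.

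First I show that feasibility forces stability and give a lower bound on the objective in terms of a Gramian. Take any feasible $(K,P,L,\gamma)$. By \eqref{lti-stab1}, $A_K P A_K^\top - P \preceq -I \prec 0$ with $P\succeq I\succ 0$, so $A_K$ is Schur. Let $P_K$ be the unique solution of the controllability Gramian equation $A_K P_K A_K^\top - P_K + I = 0$. Then $\Delta := P - P_K$ satisfies $\Delta - A_K\Delta A_K^\top = M$, where $M := P - A_K P A_K^\top - I \succeq 0$. Hence
\[
\Delta=\sum_{j\ge 0}A_K^j M (A_K^\top)^j\succeq 0 .
\]
By \eqref{lti-perf1}, $L\succeq KPK^\top\succeq KP_KK^\top$. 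Using $Q\succ0$ and $R\succ0$ in \eqref{lti-perf2}, this gives
\[
\gamma\ge \tr(QP)+\tr(RL)\ge \tr\big((Q+K^\top RK)P_K\big).
\]
Next I dualize the trace. Let $X_K$ solve the observability equation $A_K^\top X_K A_K - X_K + Q + K^\top R K = 0$. Writing $P_K=\sum_j A_K^j (A_K^\top)^j$ and $X_K=\sum_j (A_K^\top)^j (Q+K^\top RK) A_K^j$, cyclicity of the trace gives $\tr((Q+K^\top RK)P_K)=\tr(X_K)$.

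The key step, and the main obstacle, is the comparison $X_K\succeq P_r$ for every stabilizing $K$, with equality iff $K=K^\star$, the LQR gain in \eqref{lti-lq}. I would prove it by completing the square in the Riccati equation \eqref{riccati-lti-lq}. Set $W:=R+B^\top P_rB\succ0$. A direct expansion should show that
\[
A_K^\top P_r A_K - P_r + Q + K^\top R K = (K-K^\star)^\top W (K-K^\star).
\]
Subtracting this from the equation for $X_K$ gives
\[
(X_K-P_r) - A_K^\top (X_K-P_r) A_K = (K-K^\star)^\top W(K-K^\star)\succeq 0 .
\]
Since $A_K$ is Schur, this yields $X_K-P_r=\sum_j (A_K^\top)^j (K-K^\star)^\top W (K-K^\star)A_K^j\succeq0$. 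The $j=0$ term already shows that $\tr(X_K)=\tr(P_r)$ forces $K=K^\star$, because $W\succ0$. The main care needed is verifying the completed-square identity algebraically.

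Finally I close the argument by showing the bound is attained and identifying the equality cases. Combining the steps gives $\gamma\ge\tr(P_r)$ for every feasible point. The choice
\[
K=K^\star,\quad P=P_{K^\star},\quad L=K^\star P_{K^\star}K^{\star\top},\quad \gamma=\tr(P_r)
\]
is feasible, because $P_{K^\star}=I+A_{K^\star}P_{K^\star}A_{K^\star}^\top\succeq I$, and it attains the bound. So the optimal value is $\tr(P_r)$. Now take any optimal point; every inequality in the chain must be tight. First, $K=K^\star$ by the previous step. Next, $\tr(Q(P-P_K))=0$ with $Q\succ0$ and $P-P_K\succeq0$ gives $P=P_{K^\star}$. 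Likewise $\tr(R(L-KPK^\top))=0$ with $R\succ0$ gives $L=KPK^\top$. Finally, $\gamma$ equals $\tr(QP)+\tr(RL)$. This establishes uniqueness of the optimal solution, together with $K$ coinciding with \eqref{lti-lq} and $\gamma=\tr(P_r)$.
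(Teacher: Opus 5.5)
Your proof is correct, and it takes a more self-contained route than the paper. The paper cites the textbook fact that $K_*$ in \eqref{lti-lq} is the unique minimizer of the $\mathcal{H}_2$ norm of \eqref{h2phisys}. On that basis it simply posits an optimal solution of the form $(K_*,\bar P,\bar L,\bar\gamma)$ and shows $\bar P\succeq P_*$ by the same series argument you use for $\Delta$. It then squeezes the chain of inequalities to get $\bar P=P_*$ and $\bar L=L_*$. Finally it verifies $\gamma_*=\tr(P_r)$ by rewriting the Riccati equation as a closed-loop Lyapunov equation and using trace cyclicity, which is your dualization step specialized to $K=K_*$.

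You instead prove the key comparison directly. Your completed-square identity is right: substitute $B^\top P_r A=-WK^\star$, and the Riccati equation cancels $A^\top P_rA-P_r+Q-K^{\star\top}WK^\star$. This gives $X_K\succeq P_r$ for every stabilizing $K$, with trace equality only at $K^\star$. Because your bound $\gamma\ge\tr\big((Q+K^\top RK)P_K\big)=\tr(X_K)\ge\tr(P_r)$ holds for every feasible point, and not only those with gain $K_*$, you also get three things the paper delegates to the citation:
\begin{itemize}
\item existence of an optimum;
\item the explicit exclusion of optimal points with $K\neq K_*$;
\item the link showing the SDP objective upper-bounds the squared $\mathcal{H}_2$ norm.
\end{itemize}
The paper's version is shorter but leans on the citation; yours costs a bit more algebra and is fully self-contained.

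One small point to make explicit: defining $P_{K^\star}$ requires $A+BK^\star$ to be Schur, which you use without comment. It follows in one line from your identity at $K=K^\star$, namely $A_{K^\star}^\top P_rA_{K^\star}-P_r=-(Q+K^{\star\top}RK^\star)\prec0$ with $P_r\succ0$. The paper also takes this for granted.
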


\emph{Proof}. See Appendix \ref{appdx:proof:formulation-relation}. 
\qedp

\subsection{Problem formulation}

Consider the input-affine nonlinear system 
\begin{align}\label{n-sys-general-form}
    x_{t+1} =  f(x_t) + B(x_t)u_t, \quad x(0) = x_0,
\end{align}
with $x \in \R^n$ the state, $u \in \R^m$ the control input, $t \in \mathbb N_0$, $x_0 \in \R^n$ the initial condition, $f: \R^n \rightarrow \R^n$ a continuously differentiable function with $f(0)=0$ and $B: \R^n \rightarrow \R^{n \times m }$ a continuous function. 
The problem we are interested in is to design a state feedback controller in the form of
\begin{align}
    u_t = K(x_t)x_t \label{general-controller}
\end{align}
such that the origin of the closed-loop system 
\begin{align}\label{general-closed}
    x_{t+1} = f(x_t) + B(x_t)K(x_t) x_t 
\end{align}
is asymptotically stable. \qeds

Under the continuously differentiable condition on $f$, we can derive a linear-like representation for the system described in \eqref{n-sys-general-form}. 

\begin{fact}\label{form-lem}
    Consider the nonlinear system \eqref{n-sys-general-form}.
    There exists a continuous map $A: \R^n \rightarrow \R^{n\times n}$ such that 
    \begin{align}\label{f-A-x}
        f(x) = A(x)x. 
    \end{align}
\end{fact}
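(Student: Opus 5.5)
The plan is the standard integral (Hadamard-lemma) construction. Since $f$ is continuously differentiable and $f(0)=0$, the fundamental theorem of calculus applied along the segment from $0$ to $x$ gives
\begin{align*}
f(x) = f(x) - f(0) = \int_0^1 \pdv{}{s} f(sx)\, ds = \left(\int_0^1 \frac{\partial f}{\partial x}(sx)\, ds\right) x ,
\end{align*}
where $\frac{\partial f}{\partial x}$ denotes the Jacobian of $f$. I therefore define
\begin{align*}
A(x) \coloneqq \int_0^1 \frac{\partial f}{\partial x}(sx)\, ds ,
\end{align*}
with the integral taken entrywise. By construction $f(x) = A(x)x$ for every $x\in\R^n$, which is \eqref{f-A-x}.

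\textbf{Step 1 (the identity).} Fix $x$ and set $g(s)\coloneqq f(sx)$ for $s\in[0,1]$. By the chain rule, $g$ is $C^1$ with $g'(s) = \frac{\partial f}{\partial x}(sx)\,x$. Hence $g(1)-g(0)=\int_0^1 g'(s)\,ds$. Because $x$ does not depend on $s$, it factors out of the integral, which gives the display above.

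\textbf{Step 2 (continuity of $A$).} This is the only point needing any care. The map $(s,x)\mapsto \frac{\partial f}{\partial x}(sx)$ is continuous on $[0,1]\times\R^n$. Fix $x_0$ and restrict to the compact set $[0,1]\times \bar B(x_0,1)$. There the map is uniformly continuous, so for every $\varepsilon>0$ there is $\delta\in(0,1)$ such that $\eucnorm{x-x_0}<\delta$ implies
\begin{align*}
\specnorm{\tfrac{\partial f}{\partial x}(sx) - \tfrac{\partial f}{\partial x}(sx_0)} < \varepsilon \quad \text{for all } s\in[0,1].
\end{align*}
Integrating over $s\in[0,1]$ then yields $\specnorm{A(x)-A(x_0)}\le\varepsilon$, so $A$ is continuous at $x_0$. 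Alternatively, one can invoke dominated convergence directly.

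I expect no real obstacle. Two remarks are worth making for the paper's later use. First, $A$ is not unique, which is the freedom exploited in state-dependent representations. Second, $A(0)=\frac{\partial f}{\partial x}(0)$, so the representation agrees with the linearization at the origin.
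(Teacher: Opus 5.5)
Your proof is correct and complete. The paper itself gives no proof of Fact~1: it states it as a known fact and later attributes the continuity of $A$ to \cite{cloutier1999recoverabilityC1}. Your construction $A(x)=\int_0^1 \frac{\partial f}{\partial x}(sx)\,ds$ is the standard argument that fills this gap. Step~1 is a correct application of the chain rule and the fundamental theorem of calculus. In Step~2, uniform continuity on $[0,1]\times \bar B(x_0,1)$ with $\delta<1$, followed by $\specnorm{\int_0^1 M(s)\,ds}\le\int_0^1\specnorm{M(s)}\,ds$, is exactly what is needed to obtain continuity on all of $\R^n$.

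One correction to your closing aside. Non-uniqueness of $A$ holds only for $n\ge 2$; for instance, you can add $u\,(Jx)^\top$ with $J$ skew-symmetric and $u$ a fixed vector. When $n=1$, the identity $f(x)=A(x)x$ forces $A(x)=f(x)/x$ for $x\neq 0$, and continuity then forces $A(0)=f'(0)$. So the scalar representation in Section~\ref{scalar-sec} is unique. This does not affect the proof.
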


By Fact \ref{form-lem},  the system \eqref{n-sys-general-form} can be written as 
    \begin{align}\label{n-sys}
    x_{t+1} = A(x_t)x_t + B(x_t)u_t, 
\end{align}
with the continuous map $A(x)$ given in \eqref{f-A-x}.
Similarly, the closed-loop system \eqref{general-closed} reads as
\begin{align}\label{n-close-sys}
    x_{t+1} = \underbrace{(A(x_t) + B(x_t) K(x_t))}_{A_{cl}(x_t) :=} x_t. 
\end{align} 
The linear-like representation in \eqref{n-sys} and 
performance considerations suggest designing the controller in \eqref{general-controller} by iteratively solving the LQR problem for the linear system $(A(x_t), B(x_t))$ along the solution of \eqref{n-close-sys}. As will be observed, the  SDP discussed in Proposition \ref{riccati-sdp-lq-property} 
forms the basis of such a controller design.
We first specialize the problem to the scalar case to illustrate the findings.

\section{Scalar system control design}\label{scalar-sec}

Consider the nonlinear system \eqref{n-sys} with $x_t\in \R$. This gives rise to the scalar nonlinear system 
\begin{align}
    x_{t+1} = a(x_t)x_t + b(x_t)u_t, \label{scalar-state-dependent-form}
\end{align}
with $x \in \R$ the state, $u \in \R$ the control input, $x_0 \in \R$ the initial condition, $t \in \mathbb N_0$; $a: \R \rightarrow \R$ and $b: \R \rightarrow \R$ are continuous functions. 
We introduce the following assumption:   
\begin{assum}\label{assum-bx-bbarbelow}
    There exist a compact set $\mathcal{X} \subseteq \R$ including the origin and a positive constant $\underline b$ 
    such that $\eucnorm{b(x)} \geq \underline b$ for all $x \in \mathcal{X}$, where $b(x)$ is the input vector field defined in \eqref{scalar-state-dependent-form}. \qedp
\end{assum}

Note that under Assumption \ref{assum-bx-bbarbelow} and due to the continuity of $a(x)$, the state vector field is also bounded as 
\begin{equation}\label{e:L}
\eucnorm{a(x)} \leq L, \quad \forall x \in \mathcal{X}.
\end{equation}
for some $L>0$. To find the state-dependent control gain $k(x_t)$, we consider the SDP \eqref{lmi-lq} for system \eqref{scalar-state-dependent-form} with $Q$ and $R$ being identities, i.e.:
\begin{subequations}\label{sdlmi-lq}
\begin{align}
&\min\limits_{k(x), p(x), \ell(x), \gamma(x)} \quad \gamma(x) \label{cost}
\\
&\text{subject to} \notag
\\
 & (a(x)+b(x) k(x))^2 p(x) - p(x) + 1 \leq 0 \label{scalar-stab1}
\\ 
 & p(x) \geq 1 \label{scalar-stab2}
\\ 
& \ell(x) - p(x) k^2(x) \geq 0 \label{scalar-perf1} \\
& \gamma(x) \geq p(x) + \ell(x) \label{scalar-perf2}
\end{align}\label{scalar-lq}\end{subequations}%
with $x\in \mathcal{X}$. 
Note that, for any given $x\in \mathcal{X}$, the above program solves the LQ problem for the LTI system 
$x_{t+1} = (a(x) + b(x)k(x))x_t$.
The following result establishes feasibility of the program \eqref{scalar-lq} 
and a uniform upper bound on the cost  $\gamma(x)$ for every $x \in \mathcal{X}$.

\begin{prop}\label{thm-lq-scalar-bound}
For any $x\in \R$ such that $b(x) \neq 0$, the program \eqref{scalar-lq} is feasible and has a unique optimal solution  $(k(x), p(x),  \ell(x), \gamma(x))$. Moreover, under Assumption \ref{assum-bx-bbarbelow}, $\gamma(x)$ is uniformly upper bounded as
\begin{align}\label{ori-gamma-scalar-bound} 
 1 \leq \gamma(x)\leq \bar \gamma, \quad \forall x\in \mathcal{X},
\end{align}
where 
\begin{align}\label{gamma-scalar-bound} 
    \bar \gamma \coloneqq 1 + \frac{L^2}{\underline b^2}
\end{align}
with the scalar $L$ given by \eqref{e:L}. \qeds
\end{prop}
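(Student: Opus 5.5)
For a fixed $x$ with $b(x)\neq 0$, the program \eqref{scalar-lq} is the SDP \eqref{lmi-lq} for the scalar LTI pair $(a(x),b(x))$ with $Q=R=1$. I would therefore reduce everything to Proposition~\ref{riccati-sdp-lq-property}. Since $b(x)\neq 0$, the pair $(a(x),b(x))$ is controllable, hence stabilizable. Proposition~\ref{riccati-sdp-lq-property} then gives existence and uniqueness of the optimal solution $(k(x),p(x),\ell(x),\gamma(x))$, with $\gamma(x)=p_r(x)$, the unique positive solution of the scalar Riccati equation \eqref{riccati-lti-lq}. Feasibility can also be seen directly: the deadbeat choice $k=-a(x)/b(x)$ together with $p=1$, $\ell=k^2$, $\gamma=1+k^2$ satisfies \eqref{scalar-stab1}--\eqref{scalar-perf2}.

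The lower bound $\gamma(x)\ge 1$ follows immediately from the constraints. Indeed, \eqref{scalar-stab2} gives $p\ge1$, \eqref{scalar-perf1} gives $\ell\ge pk^2\ge0$, and so \eqref{scalar-perf2} yields $\gamma\ge p+\ell\ge 1$.

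For the upper bound I would not compute $p_r$ explicitly. Optimality makes $\gamma(x)$ no larger than the objective value of any feasible point, so the deadbeat point gives
\begin{align*}
\gamma(x)\le 1+\frac{a(x)^2}{b(x)^2}.
\end{align*}
I then restrict to $x\in\mathcal X$. Assumption~\ref{assum-bx-bbarbelow} gives $|b(x)|\ge\underline b$, and \eqref{e:L} gives $|a(x)|\le L$. Substituting these yields $\gamma(x)\le 1+L^2/\underline b^2=\bar\gamma$, which is \eqref{gamma-scalar-bound}.

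The main obstacle is the uniqueness claim. If Proposition~\ref{riccati-sdp-lq-property} is accepted as stated, it transfers directly to the scalar case. Otherwise I would argue directly on the scalar program. At the optimum, \eqref{scalar-perf1} and \eqref{scalar-perf2} are active, so the problem reduces to minimizing $p(1+k^2)$ subject to $p\bigl(1-(a+bk)^2\bigr)\ge1$. For each fixed $k$, the optimal value is $p=1/\bigl(1-(a+bk)^2\bigr)$. The task is then to show that $k\mapsto(1+k^2)/\bigl(1-(a+bk)^2\bigr)$ has a unique minimizer on the open interval where $|a+bk|<1$. I would establish this from the first-order condition, which reduces to the scalar Riccati relation and has a single root in that interval.
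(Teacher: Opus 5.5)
Your proposal is correct. For existence and uniqueness it matches the paper, which also invokes Proposition~\ref{riccati-sdp-lq-property} for the stabilizable pair $(a(x),b(x))$; the bounds, however, are obtained differently.

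The paper uses Proposition~\ref{riccati-sdp-lq-property} to identify $\gamma(x)$ with $p_r(x)$, the closed-form positive root of $b^2p^2-(a^2+b^2-1)p-1=0$. It then bounds that root via $\sqrt{(a^2+b^2-1)^2+4b^2}\le a^2+b^2+1$. For the lower bound it rewrites the Riccati equation as $p_r=(1+k^2)p$, with $p=1/(1-(a+bk)^2)$ the closed-loop Gramian, which gives \eqref{p-lq-bound-1}.

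You instead compare the optimum with the deadbeat feasible point $k=-a/b$, $p=1$, $\ell=k^2$. This shows that $1+a^2/b^2$ is simply the cost of the deadbeat gain. You also read $\gamma\ge p+\ell\ge p\ge 1$ directly off the constraints.

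What each route buys:
\begin{itemize}
\item Your version needs no explicit Riccati solution.
\item It still yields the inequality $p(x)\le\gamma(x)$ that the proof of Theorem~\ref{RoA-lyap-scalar} cites from \eqref{p-lq-bound-1}.
\item It carries over to \eqref{n-lq} whenever a deadbeat gain exists. For instance, for square invertible $B(x)$ it gives $\gamma(x)\le n+\tr(KK^\top)$ with $K=-B(x)^{-1}A(x)$.
\item The paper's version, in exchange, gives the exact optimal value and the explicit optimal $p(x)$.
\end{itemize}

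Your fallback uniqueness argument is unnecessary given Proposition~\ref{riccati-sdp-lq-property}, but it does go through. Stationarity of $(1+k^2)/(1-(a+bk)^2)$ reads $ab\,k^2-(1-a^2+b^2)k-ab=0$. For $a=0$ this gives $k=0$. For $a\neq0$ the two roots have product $-1$, and the root for which $bk$ has the sign of $a$ satisfies $|a+bk|>1$. Since the objective blows up at both ends of $\{k:|a+bk|<1\}$, the remaining root is the unique minimizer.
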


\emph{Proof}. See Appendix \ref{pf:prop:bargamma-analytic-scalar}. 
\qedp

\par
Next, we analyze stability of the nonlinear system 
\begin{align}\label{scalar-closed}
    x_{t+1} = (a(x_t) + b(x_t) k(x_t)) x_t, 
\end{align}
where the feedback controller $k(x_t)$ is obtained by recursively solving the program \eqref{sdlmi-lq} along the solution of the system. 
We recall the following definition. 
\begin{definition}
Let the origin be an asymptotically stable equilibrium of the system $x_{t+1} = f(x_t)$. 
A set $\mathcal{R}$ defines an RoA 
if for every $x_0 \in \mathcal{R}$ we have $\lim\limits_{t \rightarrow \infty} x_t = 0$. 
\end{definition}

For an arbitrarily chosen $r > 0$, let
\begin{align}\label{ball-def}
    B_r \coloneqq \{x\in \mathbb{R}: \eucnorm{x} \leq r\}.
\end{align}
In addition, let $\bar r$ be the largest value of $r$ such that $B_r\subseteq \mathcal{X}$. %
We then have the following result.     
\begin{thm}\label{RoA-lyap-scalar}
Let Assumption \ref{assum-bx-bbarbelow} hold. 
For each $t$, let $(k(x_t), p(x_t), \ell(x_t), \gamma(x_t))$ 
be the unique optimal solution to \eqref{scalar-lq} with $x=x_t$. Let $\bar \gamma$ be given by \eqref{gamma-scalar-bound}.
Then, any solution of closed-loop system \eqref{scalar-closed} initialized in 
\begin{align}\label{def-B-delta}
B_\delta = \{ x\in \mathbb R : \eucnorm{x} \leq \delta := \bar \gamma^{-1/2} \bar r\}
\end{align}
geometrically converges to the origin.  
Moreover, a Lyapunov function certifying stability in the sense specified below
\begin{align}\label{lyap-geo-dec-scalar}
    V_t(x_t) \leq (1-\bar \gamma^{-1})^t V_0(x_0).
\end{align}  
is given by
\begin{align}\label{lyap-def-scalar}
    V_t(x_t) = x^\top_t \tilde p^{-1}(x_t) x_t,
\end{align}
where 
\begin{align}\label{p-def-scalar}
    \tilde p(x_t) = \max\{p(x_t), \tilde p(x_{t-1})\}, t \geq 1
\end{align}
and $\tilde p(x_0) = p(x_0)$. \qeds
\end{thm}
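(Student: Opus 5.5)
The plan is an induction on $t$ that does two things together: it keeps the trajectory inside $\mathcal{X}$, where Proposition \ref{thm-lq-scalar-bound} applies, and it establishes the geometric decrease \eqref{lyap-geo-dec-scalar} of $V_t$. The basic ingredient is a per-step contraction read off the constraints of \eqref{scalar-lq}. Suppose $x_t\in\mathcal{X}$. Then Assumption \ref{assum-bx-bbarbelow} gives $b(x_t)\neq 0$, and Proposition \ref{thm-lq-scalar-bound} yields a unique optimal solution with $\gamma(x_t)\le\bar\gamma$. From \eqref{scalar-perf1} we get $\ell(x_t)\ge p(x_t)k^2(x_t)\ge 0$, so \eqref{scalar-perf2} and \eqref{scalar-stab2} give
\begin{align*}
1\le p(x_t)\le \gamma(x_t)\le\bar\gamma .
\end{align*}
Dividing \eqref{scalar-stab1} by $p(x_t)$ then gives
\begin{align*}
(a(x_t)+b(x_t)k(x_t))^2\le 1-p(x_t)^{-1}\le 1-\bar\gamma^{-1}.
\end{align*}
By \eqref{scalar-closed}, this means $x_{t+1}^2\le(1-\bar\gamma^{-1})x_t^2$.

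Next I would record the properties of $\tilde p$ from \eqref{p-def-scalar}. As long as the iterates remain in $\mathcal{X}$, $\tilde p(x_t)$ is a running maximum of the values $p(x_s)$, $s\le t$. Hence it is nondecreasing in $t$ and satisfies $1\le\tilde p(x_t)\le\bar\gamma$. Using monotonicity and the contraction above,
\begin{align*}
V_{t+1}(x_{t+1})=\frac{x_{t+1}^2}{\tilde p(x_{t+1})}\le\frac{x_{t+1}^2}{\tilde p(x_t)}\le(1-\bar\gamma^{-1})\frac{x_t^2}{\tilde p(x_t)}=(1-\bar\gamma^{-1})V_t(x_t).
\end{align*}
Iterating this gives \eqref{lyap-geo-dec-scalar}. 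Since $V_t(x_t)\ge \bar\gamma^{-1}x_t^2$ and $V_0(x_0)\le x_0^2$, it also gives $x_t^2\le\bar\gamma(1-\bar\gamma^{-1})^t x_0^2$, which is geometric convergence.

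The main point to handle carefully is the invariance step: every bound above presupposes $x_t\in\mathcal{X}$, since otherwise the SDP solution and the bound $\bar\gamma$ are not available. I would fold this into the induction hypothesis, namely "$x_s\in B_{\bar r}\subseteq\mathcal{X}$ for all $s\le t$". To close the induction I would use the sublevel-set argument that matches the choice $\delta=\bar\gamma^{-1/2}\bar r$. For $x_0\in B_\delta$, we have $V_0(x_0)\le x_0^2\le\delta^2$. The decrease established up to time $t$ then gives
\begin{align*}
x_{t+1}^2\le\tilde p(x_{t+1})V_{t+1}(x_{t+1})\le\bar\gamma V_0(x_0)\le\bar\gamma\delta^2=\bar r^2 .
\end{align*}
A subtlety here is that bounding $\tilde p(x_{t+1})$ by $\bar\gamma$ already needs $x_{t+1}\in\mathcal{X}$. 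I would avoid this circularity by first using the direct contraction $|x_{t+1}|\le|x_t|\le\bar r$. That places $x_{t+1}\in B_{\bar r}\subseteq\mathcal{X}$, after which $\tilde p(x_{t+1})$ is well defined and bounded by $\bar\gamma$. The base case holds because $\bar\gamma\ge1$ implies $\delta\le\bar r$.
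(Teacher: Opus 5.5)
Your proof is correct and shares the paper's skeleton: an induction keeping $x_t \in B_{\bar r} \subseteq \mathcal{X}$, the running maximum $\tilde p(x_t) \in [1,\bar\gamma]$, the chain $V_{t+1}(x_{t+1}) \le V_t(x_{t+1}) \le (1-\bar\gamma^{-1})V_t(x_t)$, and the final estimate $|x_t|^2 \le \bar\gamma(1-\bar\gamma^{-1})^t |x_0|^2$.

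The one genuine difference is how you keep the trajectory inside $\mathcal{X}$. The paper uses the sublevel-set bound $|x_{t+1}|^2 = \tilde p(x_t) V_t(x_{t+1}) \le \bar\gamma (1-\bar\gamma^{-1})^{t+1} V_0(x_0) < \bar r^2$. It multiplies by $\tilde p(x_t)$, which is already known to be at most $\bar\gamma$, rather than by $\tilde p(x_{t+1})$. So the circularity you flagged comes only from your choice of time index and does not arise in the paper's version. This step is exactly where the shrunken radius $\delta = \bar\gamma^{-1/2}\bar r$ is used. It is also the argument that carries over to the matrix setting, where only $V_t$, and not $|x_t|$, is guaranteed to decrease.

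Your direct contraction $x_{t+1}^2 \le (1-\bar\gamma^{-1})x_t^2$ instead exploits the scalar structure; in effect $x^2$ is already a Lyapunov function. It makes $B_{\bar r}$ itself forward invariant. Your argument therefore proves the conclusion for every $x_0 \in B_{\bar r}$, not just $B_\delta$, with the sharper bound $|x_t|^2 \le (1-\bar\gamma^{-1})^t |x_0|^2$. Your sublevel-set display then becomes redundant.

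There are two smaller differences. You obtain $p \le \gamma$ directly from the constraints \eqref{scalar-perf1}--\eqref{scalar-perf2}, rather than from the appendix identity $\gamma = (1+k^2)p$. You also handle $x_t = 0$ uniformly, since $0 \in \mathcal{X}$ keeps the program solvable there, whereas the paper treats that case separately.
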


\emph{Proof}:
Let $x_0 \in B_\delta \setminus \{0\}$. By Proposition \ref{thm-lq-scalar-bound}, the program \eqref{scalar-lq} is feasible with $x = x_0$. The optimal solution $(k(x_0), p(x_0), \ell(x_0), \gamma(x_0))$ satisfies \eqref{scalar-stab1}-\eqref{scalar-perf2}. 
Let 
\begin{align}\label{lyap-sequence}
    V_i(x_j) \coloneqq x_j^\top \tilde p^{-1}(x_i) x_j
\end{align}
with $i, j \in \mathbb N_0$. 
By multiplying both sides of \eqref{scalar-stab1} with $\tilde p^{-2}(x_0) x_0^2$, we have
\begin{align}\label{drop-pf1}
\begin{aligned}
     &\tilde p^{-1}(x_0) (a(x_0)+b(x_0)k(x_0))^2 x_0^2 - \tilde p^{-1}(x_0) x_0^2 \\
  \leq &-\tilde p^{-2}(x_0) x_0^2
\end{aligned}
\end{align}
namely, $V_0(x_1) \leq (1-\tilde p^{-1}(x_0)) V_0(x_0).$ 
Next, since $\tilde p(x_0) = p(x_0)$ where $p(x_0)$ satisfies
\begin{align}\label{scalar-p0-bound}
 1 \stackrel{\eqref{scalar-stab2}}{\leq} p(x_0)  \stackrel{\eqref{p-lq-bound-1}}{\leq}  \gamma(x_0) 
 \stackrel{\eqref{gamma-scalar-bound}}{\leq}  \bar \gamma   
\end{align}
we have
\begin{align}\label{drop-pf2}
\begin{array}{rl}
V_0(x_1) {\leq}  (1- \bar\gamma ^{-1}) V_0(x_0). 
\end{array}
\end{align}
By recalling the definition of $V_0(x_1)$ and applying the inequality \eqref{drop-pf2}, we find that %
\begin{align}\label{x1-x0-scalar-pf}
 \begin{array}{rl}
  \eucnorm{x_1}^2  \stackrel{\eqref{lyap-sequence}}{=}
 \tilde p(x_0) V_0(x_1)  \stackrel{\eqref{scalar-p0-bound} \eqref{drop-pf2}}{\leq} \bar \gamma (1-\bar\gamma ^{-1}) V_0(x_0).
 \end{array}
\end{align}
Moreover, since $V_0(x_0) \stackrel{\eqref{lyap-sequence}}{=} \tilde p^{-1}(x_0) \eucnorm{x_0}^2 
= p^{-1}(x_0) \eucnorm{x_0}^2 \stackrel{\eqref{scalar-p0-bound}}{\leq} \eucnorm{x_0}^2$ and $\eucnorm{x_0}^2 \leq \delta^2$ with $\delta = \frac{\bar r}{\sqrt{\bar \gamma}}$, it follows that 
\begin{align}\label{x1-in-ball}
    \eucnorm{x_1}^2 \stackrel{\eqref{x1-x0-scalar-pf}}{\leq} 
    \bar \gamma(1-\bar \gamma^{-1}) V_0(x_0) 
    \leq \bar \gamma (1-\bar \gamma^{-1}) \frac{\bar r^2}{\bar \gamma} .
\end{align}
Since by \eqref{gamma-scalar-bound}, $\bar \gamma \geq 1$, then we have $\eucnorm{x_1}^2 < \bar r^2$, which implies $x_1 \in B_{\bar r}$. 
\par
Next, since $B_{\bar r} \subseteq \mathcal{X}$, we have $x_1 \in \mathcal{X}$. 
If $x_1 = 0$, then it implies that the solution of the closed-loop system is at the equilibrium and the control input $u_t$ for $t \geq 1$ will be 0.  
If $x_1 \in B_{\bar r} \setminus \{0\}$, then we continue using \eqref{scalar-lq} to find the optimal controller $k(x_1)$ and the optimal solution $(p(x_1), \ell(x_1), \gamma(x_1))$, as analyzed in the proof of Proposition \ref{thm-lq-scalar-bound}. 
Moreover, since $p(x_1) \stackrel{\eqref{p-lq-bound-1}}{\leq} \gamma(x_1) \stackrel{\eqref{gamma-scalar-bound}}{\leq} \bar \gamma$ and $\tilde p(x_0) = p(x_0) \stackrel{\eqref{scalar-p0-bound}}{\leq} \bar \gamma$, it follows that $\max\{p(x_1), \tilde p(x_0)\} \leq \bar \gamma$. Recalling the definition of $\tilde p(x_1)$ in \eqref{p-def-scalar}, we have that 
\begin{align}\label{scalar-p-x1-def}
  1 \leq \tilde p(x_1) = \max\{p(x_1), \tilde p(x_0)\} \leq \bar \gamma.
\end{align}
\par
Next, by repeating the arguments in \eqref{drop-pf1} with $x_0$ replaced by $x_1$, we again have \eqref{drop-pf2} but one-step forward, namely, $V_1(x_2) \leq (1-\bar \gamma^{-1}) V_1(x_1).$ 
Moreover, since $\tilde p(x_1) \geq \tilde p(x_0)$, it gives
$V_1(x_1) \leq V_0(x_1)$. Hence, we have  
$V_1(x_2) \leq (1-\bar \gamma^{-1}) V_0(x_1) \stackrel{\eqref{drop-pf2}}{\leq} 
    (1-\bar \gamma^{-1})^2 V_0(x_0). $     
This enables us to repeat the arguments in \eqref{x1-x0-scalar-pf}, \eqref{x1-in-ball} and to have $\eucnorm{x_2}^2 < \bar r^2$. 
By recursively repeating these arguments, it holds that $\eucnorm{x_t}^2 < \bar r^2$ and thus $x_t \in B_{\bar r} \subseteq \mathcal{X}$. 
Then, by Proposition \ref{thm-lq-scalar-bound}, feasibility of the program \eqref{sdlmi-lq} along with the solution $x_t$ of the closed-loop system \eqref{scalar-closed} is guaranteed for every $t \in \mathbb N_0$. 
\par
Next, notice that for every $t \in \mathbb N_0$, the inequalities $V_t(x_{t+1}) \leq (1-\bar \gamma^{-1}) V_t(x_t) $ and $V_{t+1}(x_{t+1}) \leq V_t(x_{t+1})$ hold. Then, we have $V_{t+1}(x_{t+1}) \leq (1-\bar \gamma^{-1}) V_t(x_t)$ 
and thus \eqref{lyap-geo-dec-scalar} holds. 
Moreover, note that \eqref{scalar-p-x1-def} holds with $x_0$ replaced by $x_t$ and $x_1$ replaced by $x_{t+1}$ for any $t \in \mathbb N_0$, and thus 
\begin{align}\label{tilde-p-bound}
  1 \leq \tilde{p}(x_t) \leq \bar \gamma, \quad t \in \mathbb N_0.
\end{align}
Then, it holds that 
\begin{align}\label{xk-x0-scalar}
\begin{array}{rl}
    \eucnorm{x_t}^2 \stackrel{\eqref{lyap-def-scalar}}{=} \tilde p(x_t) V_t(x_t) \stackrel{\eqref{tilde-p-bound}}{\leq} \bar\gamma V_t(x_t)     \stackrel{\eqref{lyap-geo-dec-scalar}}{\leq}
    \bar\gamma (1-\bar \gamma^{-1})^t V_0(x_0) .
\end{array}
\end{align}
By \eqref{tilde-p-bound}, $\tilde p^{-1}(x_0)$ satisfies $\bar \gamma^{-1} \leq \tilde p^{-1}(x_0) \leq 1$, which implies $V_0(x_0) \leq \eucnorm{x_0}^2$. Then by \eqref{xk-x0-scalar}, we have $ \eucnorm{x_t}^2    \leq \bar\gamma (1-\bar \gamma^{-1})^t 
    \eucnorm{x_0}^2$. 
This completes the proof.    \qedp

The result shows that under Assumption 1, by recursively solving \eqref{scalar-lq}, the solution of the closed-loop system geometrically converges to the origin provided that the initial condition is inside the set $B_\delta$ defined as in \eqref{def-B-delta}. As a result, the set $B_\delta$ defines an RoA estimate for system \eqref{scalar-closed}. 
Moreover, notice that the definition of $\tilde p(x)$ in \eqref{p-def-scalar} ensures the sequence $V_i(x_j)$ defined as in \eqref{lyap-sequence} is monotonically decreasing, as illustrated in Fig.~\ref{fig-lyap-drop-illu}. 

\begin{figure}[htb]
    \centering
    \includegraphics[scale=0.4]{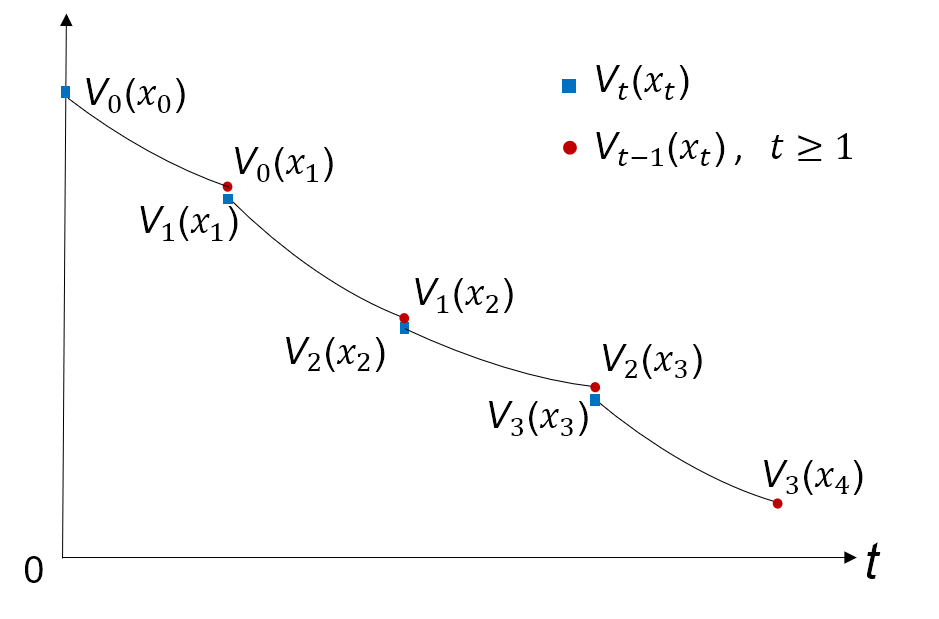}
    \caption{The illustration of the monotonically decreasing sequence. }
    \label{fig-lyap-drop-illu}
\end{figure}

\section{Higher-dimensional system control design}\label{higher-dimen-sec}

We continue with the control design for the higher-dimensional system described in \eqref{n-sys}.  
To find the sequence $K(x_t)$ in \eqref{general-controller}, we generalize \eqref{scalar-lq} to  
\begin{subequations}\label{n-lq}
   \begin{align}
&\min\limits_{K(x), P(x), L(x), \gamma(x)} \quad \gamma(x) \label{n-cost}
\\
&\text{subject to} \notag
\\ & 
(A(x)+B(x) K(x)) I \cdot P(x) (\star)^\top - P(x) + I_n \preceq 0     
\label{n-dimension-stab1}
\\ & P(x) \succeq I_n \label{n-dimension-stab2}
\\ & L(x) - K(x) P(x) K^\top(x) \succeq 0 \label{n-dimension-perf1} 
\\ & \gamma(x) \geq \operatorname{tr} P(x) + \operatorname{tr} L(x) \label{n-dimension-perf2}
\end{align} 
\end{subequations}%
where $K(x) \in \R^{m \times n}$, $P(x) \in \mathbb S^{n\times n}$, $L(x) \in \mathbb S^{m \times m}$, $\gamma(x) \in \R$. 
Then, for any $x_t \in \R^n$ that makes $(A(x_t), B(x_t))$ stabilizable, it follows from Proposition \ref{riccati-sdp-lq-property} that there exists a unique optimal solution $(K(x_t), P(x_t), L(x_t), \gamma(x_t))$ to \eqref{n-lq} with $x = x_t$.

We revisit Theorem \ref{RoA-lyap-scalar} and use \eqref{n-lq} as a basis to design another SDP whose optimal solutions provide us a sequence of feedback control gain $K(x_t)$ in \eqref{n-close-sys} and a matrix sequence $P(x_t) \succ 0$ that builds a time-varying Lyapunov function 
\begin{align} \label{lyap-def-n-dim}
    V_t(x_t) =
     x_t^\top P^{-1}(x_t) x_t 
\end{align}
to certify the stability of the origin of \eqref{n-close-sys}. 
To this end, inspired by the Lyapunov stability criterion for discrete-time time-varying systems \cite[Thm. 23.3]{rugh1996linear}, we impose the following Lyapunov stability conditions: 
\begin{subequations}\label{conditions}
   \begin{align}
& \bar \gamma^{-1} \eucnorm{x_t}^2 \leq V_t (x_t) \leq \eucnorm{x_t}^2  \label{cond1} \\
& V_{t}(x_{t+1}):= x_{t+1}^\top P^{-1}(x_{t}) x_{t+1} < V_t(x_t) \label{asym-cond2} \\
& V_{t+1}(x_{t+1}) \leq V_t(x_{t+1}) \label{cond3}
\end{align} 
\end{subequations}%
where $\bar \gamma$ is a positive constant to be defined later. 
For the case of geometric stability, \eqref{asym-cond2} is strengthened to 
\begin{align}\label{cond2}
V_{t}(x_{t+1}) \leq (1-\bar \gamma^{-1}) V_t(x_t).
\end{align}

\subsection{Time-varying SDP-based online nonlinear control} \label{pf-reform}

From $t \geq 1$, we modify \eqref{n-lq} by setting $x= x_t$ and adding the constraint
\begin{align}
    \begin{bmatrix} x^\top_t P^{-1}(x_{t-1}) x_t & x_t^\top \\ * & P(x_t) \end{bmatrix} \succeq 0, 
\end{align}  
and a user-defined constant $\bar \gamma$ that serves as a uniform upper bound for the sequence $\gamma(x_t)$. 
The solution to the modified SDP is established in the following proposition. 

\begin{prop}\label{redef-sdp-property}
Consider the program:  
\begin{subequations}\label{n-sdp}
\begin{align}
&\min\limits_{Y(x_t), P(x_t), L(x_t), \gamma(x_t)} \quad \gamma(x_t) \label{n-sdp-cost}
\\
&\text{subject to} \notag
\\ & \begin{bmatrix} P(x_t)-I & A(x_t) P(x_t) + B(x_t) Y(x_t) \\ * & P(x_t)
\end{bmatrix} \succeq 0 \label{n-sdp-stab1}
\\ & \begin{bmatrix} L(x_t) & Y(x_t)\\ * & P(x_t)
\end{bmatrix} \succeq 0 \label{n-sdp-perf1} 
\\ & \gamma(x_t) \geq \operatorname{tr} P(x_t) + \operatorname{tr} L(x_t) \label{n-sdp-perf2}
\\ & \begin{bmatrix} x^\top_t P^{-1}(x_{t-1}) x_t & x_t^\top \\ * & P(x_t) \end{bmatrix} \succeq 0, t \geq 1 
\label{n-sdp-stab3}
\\ & \gamma(x_t) \leq \bar \gamma , t \geq 1 
\label{n-sdp-stab4}
\end{align}\end{subequations}%
with $t \geq 1$, 
$Y(x_t) \in \R^{m \times n}$, $P(x_t) \in \mathbb S^{n \times n}$, $L(x_t) \in \mathbb S^{m \times m}$ and $\gamma(x_t) \in \R$. Suppose \eqref{n-sdp} is feasible for a given point $x_t\in \R^n$ and a given matrix 
$P(x_{t-1}) \succ 0$. Then there exists an optimal solution to \eqref{n-sdp}. Moreover, any optimal solution $(Y(x_t), P(x_t), L(x_t), \gamma(x_t))$ is such that 
\begin{align}\label{opt-sol-bound-P} 
  I \preceq P(x_t) \preceq \bar \gamma I 
\end{align}
and
\begin{align}\label{decre-lyap} \operatorname{L_{cl}}(x_t) := A_{cl}^\top(x_t) P^{-1}(x_t) A_{cl}(x_t) - P^{-1}(x_t) \prec 0
\end{align}
where $A_{cl}(x_t)$ is given by \eqref{n-close-sys} with $K(x_t):= Y(x_t) P^{-1}(x_t)$.
Moreover, if $P(x_t)-I$ is nonsingular, then 
\begin{align}\label{decre-geo-lyap} 
\operatorname{L_{cl}}(x_t) \preceq - \lambda_m(P^{-1}(x_t)) P^{-1}(x_t) 
\end{align}
\end{prop}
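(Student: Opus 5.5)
Three things need to be shown: an optimal solution exists, the bounds \eqref{opt-sol-bound-P} hold, and the decrease conditions \eqref{decre-lyap}, \eqref{decre-geo-lyap} hold. The decrease conditions will come from Schur complements applied to \eqref{n-sdp-stab1}.

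\emph{Existence.} I will argue that the feasible set is closed and that the relevant part of it is bounded. Feasibility is assumed. All constraints are closed LMIs in $(Y,P,L,\gamma)$, because $x_t$ and $P(x_{t-1})$ are fixed data. Together with \eqref{n-sdp-perf2}, the constraint $\gamma\le\bar\gamma$ gives $\tr P + \tr L \le \bar\gamma$. Both diagonal blocks are positive semidefinite: $P\succeq I$ follows from the $(1,1)$ block of \eqref{n-sdp-stab1}, and $L\succeq 0$ follows from \eqref{n-sdp-perf1}. Hence $P$ and $L$ lie in a bounded set. The off-diagonal block is then bounded through \eqref{n-sdp-perf1}, since $YP^{-1}Y^\top\preceq L$ with $P\preceq \bar\gamma I$ gives $YY^\top \preceq \bar\gamma L$. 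Finally, $\gamma\ge \tr P+\tr L \ge n$. So the feasible set is compact, and a continuous linear objective attains its minimum by Weierstrass.

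\emph{Bounds.} From the $(1,1)$ block of \eqref{n-sdp-stab1} we get $P(x_t)\succeq I$. The upper bound follows from $\lambda_M(P)\le \tr P \le \gamma \le \bar\gamma$, using $\tr L\ge 0$. This gives \eqref{opt-sol-bound-P}, and in particular $P(x_t)\succ 0$, so $K=YP^{-1}$ is well defined. One caveat: at $t=0$ the bound \eqref{n-sdp-stab4} is absent. I would then either read $\bar\gamma$ as $\gamma(x_0)$ or note that the statement is made for $t\ge1$.

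\emph{Decrease.} Write $AP+BY=(A+BK)P=A_{cl}P$. Since $P\succ0$, the Schur complement of \eqref{n-sdp-stab1} with respect to the $(2,2)$ block gives
\[
P - I - A_{cl}PA_{cl}^\top \succeq 0,
\]
that is, $A_{cl}PA_{cl}^\top \preceq P - I \prec P$. The main obstacle is turning this dual-form inequality into the primal form $A_{cl}^\top P^{-1}A_{cl}\prec P^{-1}$. I would use the fact that $A_{cl}PA_{cl}^\top\prec P$ is equivalent to $\specnorm{P^{-1/2}A_{cl}P^{1/2}}<1$. This norm is invariant under transposition, and $\specnorm{P^{1/2}A_{cl}^\top P^{-1/2}}<1$ is equivalent to $A_{cl}^\top P^{-1}A_{cl}\prec P^{-1}$ after congruence with $P^{-1/2}$. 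This proves \eqref{decre-lyap}.

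\emph{Geometric rate.} For \eqref{decre-geo-lyap}, set $M:=P^{-1/2}A_{cl}P^{1/2}$. Congruence of the dual inequality gives
\[
MM^\top \preceq I - P^{-1} \preceq (1-\lambda_m(P^{-1}))I .
\]
Hence $\specnorm{M}^2\le 1-\lambda_m(P^{-1})$, and therefore $M^\top M\preceq(1-\lambda_m(P^{-1}))I$. Congruence with $P^{-1/2}$ then yields
\[
A_{cl}^\top P^{-1}A_{cl}\preceq (1-\lambda_m(P^{-1}))P^{-1},
\]
which is exactly \eqref{decre-geo-lyap}. The nonsingularity hypothesis on $P-I$ is not actually needed by this route. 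If I want to use it, it ensures $I-P^{-1}\succ 0$, so that $\lambda_m(P^{-1})<1$ and the contraction factor $1-\lambda_m(P^{-1})$ is strictly positive. I would mention this rather than rely on it.
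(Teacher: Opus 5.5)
Your proof is correct. The existence and bound parts match the paper's essentially step for step. The paper bounds $\specnorm{Y(x_t)}^2\le\bar\gamma(\bar\gamma-n)$ with a trace estimate, whereas you use $YY^\top\preceq\bar\gamma L$. You also state explicitly that the feasible set is closed, which the paper leaves implicit.

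The decrease part takes a different route. The paper works entirely with LMIs. It applies a congruence with $P^{-1}$ to $A_{cl}PA_{cl}^\top-P+I\preceq 0$ and repackages the result as $\left[\begin{smallmatrix} -P^{-1} & A_{cl}^\top P^{-1} \\ P^{-1}A_{cl} & -P^{-1}\end{smallmatrix}\right]\prec 0$. It then obtains \eqref{decre-lyap} by taking the Schur complement with respect to the other diagonal block. For the rate it repeats this with the $(2,2)$ block $-(1-\lambda_m(P^{-1}))P^{-1}$. The nonsingularity of $P(x_t)-I$ is used only to make that block invertible.

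You instead normalize to $M=P^{-1/2}A_{cl}P^{1/2}$ and use $\specnorm{M}=\specnorm{M^\top}$. This gives the strict decrease and the rate in one step, with no block to invert. It also yields a slightly stronger statement: \eqref{decre-geo-lyap} holds without the nonsingularity hypothesis. Given $P\succeq I$, the only case with $1-\lambda_m(P^{-1})=0$ is $P=I$. That case forces $A_{cl}=0$, and then \eqref{decre-geo-lyap} holds with equality. Consequently the geometric-stability part of Theorem \ref{n-dim-stability} also goes through without that assumption.

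The paper's route keeps the argument in the Schur-complement language used throughout the paper. Its price is the extra hypothesis, which is stronger than even its own final Schur step needs: only $P\neq I$ is required there, not $P\succ I$.
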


\emph{Proof}. See Appendix \ref{pf:prop:regu-sdp}. \qedp  

Note that at $t = 0$, \eqref{n-sdp} boils down to \eqref{n-lq} with $x=x_0$ and thus by Proposition \ref{riccati-sdp-lq-property}, stabilizability of $(A(x_0), B(x_0))$ implies feasibility and existence of a unique optimal solution $(Y(x_0), P(x_0), L(x_0), \gamma(x_0))$.
For $t \geq 1$, by assuming feasibility of \eqref{n-sdp} at $t-1$, \eqref{n-sdp-stab3} uses an optimal solution $P(x_{t-1})$ as the given positive definite matrix and thus \eqref{n-sdp} is well-defined. Moreover, in doing so, we have the following stability result. 

\begin{thm}\label{n-dim-stability}
Assume that \eqref{n-sdp} is feasible for every $t \geq 0$ and a given $\bar \gamma$. Then, the origin of the closed-loop system \eqref{n-close-sys} with $K(x_t):= Y(x_t) P^{-1}(x_t)$ and $(Y(x_t), P(x_t), L(x_t), \gamma(x_t))$ any optimal solution to \eqref{n-sdp} is asymptotically stable. 
Furthermore, if the matrix $P(x_t)-I$ is nonsingular  for all $t$,  
then the origin is geometrically stable, namely 
\begin{align}\label{n-dim-geo}
    \eucnorm{x_t} \leq \sqrt{\bar \gamma} \sqrt{(1- \bar \gamma^{-1})^t} \eucnorm{x_0}.
\end{align}
\end{thm}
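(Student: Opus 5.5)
The plan is to verify the three Lyapunov conditions \eqref{cond1}, \eqref{asym-cond2} (or \eqref{cond2}) and \eqref{cond3} for the time-varying function $V_t(x_t)=x_t^\top P^{-1}(x_t)x_t$ in \eqref{lyap-def-n-dim}, and then chain them as in the proof of Theorem \ref{RoA-lyap-scalar}.

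\textbf{Sandwich bound.} By Proposition \ref{redef-sdp-property}, every optimal solution satisfies $I\preceq P(x_t)\preceq\bar\gamma I$ as in \eqref{opt-sol-bound-P}, hence $\bar\gamma^{-1}I\preceq P^{-1}(x_t)\preceq I$. This gives \eqref{cond1} for all $t\ge 1$. At $t=0$ the bound $P(x_0)\succeq I$ still holds. For the upper side I will use $P(x_0)\preceq \gamma(x_0) I$, which follows from \eqref{n-sdp-perf2} since $\tr L\ge 0$. I will then take $\bar\gamma\ge\gamma(x_0)$; either this is implicitly required by the choice of $\bar\gamma$, or the constant in \eqref{n-dim-geo} has to be adjusted by replacing $\bar\gamma$ with $\max\{\bar\gamma,\gamma(x_0)\}$. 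This is the point I would flag and handle explicitly.

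\textbf{One-step decrease.} Since $x_{t+1}=A_{cl}(x_t)x_t$, we have $V_t(x_{t+1})-V_t(x_t)=x_t^\top \operatorname{L_{cl}}(x_t)x_t$. By \eqref{decre-lyap} this is negative for $x_t\ne 0$, which is \eqref{asym-cond2}. If $P(x_t)-I$ is nonsingular, \eqref{decre-geo-lyap} gives
\[
x_t^\top \operatorname{L_{cl}}(x_t)x_t\le -\lambda_m(P^{-1}(x_t))\,V_t(x_t)\le -\bar\gamma^{-1}V_t(x_t),
\]
using $\lambda_m(P^{-1})=1/\lambda_M(P)\ge\bar\gamma^{-1}$. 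This is \eqref{cond2}.

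\textbf{Monotonicity across steps.} Apply a Schur complement to \eqref{n-sdp-stab3}; this is valid since $P(x_{t+1})\succ 0$. It gives
\[
x_{t+1}^\top P^{-1}(x_t)x_{t+1}\ge x_{t+1}^\top P^{-1}(x_{t+1})x_{t+1},
\]
which is exactly \eqref{cond3}.

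\textbf{Chaining.} In the geometric case, combining the decrease and monotonicity steps gives $V_{t+1}(x_{t+1})\le(1-\bar\gamma^{-1})V_t(x_t)$, so $V_t(x_t)\le(1-\bar\gamma^{-1})^tV_0(x_0)$. Then the sandwich bound yields
\[
|x_t|^2\le\bar\gamma V_t(x_t)\le\bar\gamma(1-\bar\gamma^{-1})^t|x_0|^2,
\]
and taking square roots gives \eqref{n-dim-geo}.

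In the asymptotic case, $V_t(x_t)$ is nonincreasing and bounded below by $\bar\gamma^{-1}|x_t|^2$. This gives Lyapunov stability: $|x_t|^2\le\bar\gamma|x_0|^2$. For attractivity, $W_t:=V_t(x_t)$ converges, and the strict decrease alone gives $\sum_t x_t^\top(-\operatorname{L_{cl}}(x_t))x_t<\infty$.

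\textbf{Main obstacle.} The hard step is convergence to zero in the non-geometric case. The strict inequality \eqref{decre-lyap} is not uniform, because Proposition \ref{redef-sdp-property} provides no uniform margin when $P(x_t)-I$ is singular. I would first try to obtain a uniform margin directly from \eqref{n-sdp-stab1}: its Schur complement gives $P-I\succeq A_{cl}PA_{cl}^\top$, so the discrepancy is at least $I$ in the primal form. Transforming that to the dual form via $P^{-1}$, I would aim for $\operatorname{L_{cl}}\preceq -c\,P^{-2}$-type bounds, and these are uniform since $P\preceq\bar\gamma I$. That would give $V_{t+1}\le V_t-c'|x_t|^2$ and hence $x_t\to0$. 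If this fails, I would fall back on compactness and continuity arguments, accepting a weaker conclusion.
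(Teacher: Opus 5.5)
Your skeleton matches the paper's proof. You check \eqref{cond1}, \eqref{asym-cond2} (resp.\ \eqref{cond2}) and \eqref{cond3} for $V_t(x_t)=x_t^\top P^{-1}(x_t)x_t$ via \eqref{opt-sol-bound-P}, \eqref{decre-lyap} (resp.\ \eqref{decre-geo-lyap}) and the Schur complement of \eqref{n-sdp-stab3}, and then chain. Both caveats you raise are genuine, and the paper's proof passes over both.

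\emph{The $t=0$ bound.} The paper applies \eqref{opt-sol-bound-P} at $t=0$, but Proposition~\ref{redef-sdp-property} is stated only for $t\ge 1$, and at $t=0$ the constraint \eqref{n-sdp-stab4} is absent. Your fix is correct: require $\bar\gamma\ge\gamma(x_0)$, or replace $\bar\gamma$ by $\max\{\bar\gamma,\gamma(x_0)\}$ in \eqref{n-dim-geo}. The paper's numerical choice $\bar\gamma=\frac{\lambda_M(P(x_0))}{\lambda_m(P(x_0))}\gamma(x_0)$ satisfies it.

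\emph{Attractivity.} The paper treats the non-uniform strict decrease \eqref{asym-cond2} as sufficient for attractivity. For a time-varying $V_t$ this does not by itself give $x_t\to 0$.

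The step your proposal leaves open is exactly this attractivity claim. The missing idea is that no separate treatment is needed, because \eqref{decre-geo-lyap} holds for every optimal solution, with or without nonsingularity of $P(x_t)-I$. Note that congruence of the Schur complement of \eqref{n-sdp-stab1} by $P^{-1}$ only yields \eqref{guodu}. That bounds $P^{-1}A_{cl}PA_{cl}^\top P^{-1}$, not $A_{cl}^\top P^{-1}A_{cl}$, so you need a transpose swap:
\begin{itemize}
\item Put $N:=P^{-1/2}A_{cl}P^{1/2}$, with everything evaluated at $x_t$.
\item The inequality $A_{cl}PA_{cl}^\top\preceq P-I$ reads $NN^\top\preceq I-P^{-1}$, so $\specnorm{N}^2\le 1-\lambda_m(P^{-1})$.
\item Hence $N^\top N\preceq(1-\lambda_m(P^{-1}))I$.
\item Congruence by $P^{-1/2}$ turns this into $A_{cl}^\top P^{-1}A_{cl}\preceq(1-\lambda_m(P^{-1}))P^{-1}$, which is \eqref{decre-geo-lyap}.
\end{itemize}
Equivalently, the Schur step in the proof of Proposition~\ref{redef-sdp-property} only needs $\lambda_M(P(x_t))>1$, i.e.\ $P(x_t)\neq I$. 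If $P(x_t)=I$, then \eqref{n-sdp-stab1} forces $A_{cl}(x_t)A_{cl}^\top(x_t)\preceq 0$, so $A_{cl}(x_t)=0$ and the bound holds trivially.

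Combined with $P(x_t)\preceq\bar\gamma I$ for $t\ge 1$, and with your convention at $t=0$, this gives \eqref{cond2} with the uniform factor $1-\bar\gamma^{-1}$. So asymptotic stability follows from the geometric estimate, and the nonsingularity hypothesis is in fact superfluous. Your $P^{-2}$-type target is also implied, since $P^{-2}\preceq P^{-1}$ when $P\succeq I$.

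Drop the compactness/continuity fallback. The closed-loop map depends on $P(x_{t-1})$ and on the selection among possibly non-unique optimizers, so there is no continuity to exploit.
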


\emph{Proof}:
To prove asymptotic stability of the origin, we show that \eqref{lyap-def-n-dim} satisfy \eqref{conditions} and thus serves as a Lyapunov function for the closed-loop system \eqref{n-close-sys}. 
By \eqref{opt-sol-bound-P}, \eqref{cond1} holds.   
By \eqref{decre-lyap}, the inequality \eqref{asym-cond2} holds. Moreover, computing the Schur complement of \eqref{n-sdp-stab3} yields \eqref{cond3}. 
This completes the proof for asymptotic stability of the origin. 
\par
By \eqref{opt-sol-bound-P} and under the nonsingulairty condition on $P(x_t)-I$, we have $I \prec P(x_t) \preceq \bar \gamma I$. 
Then, by Proposition \ref{redef-sdp-property}, \eqref{decre-geo-lyap} holds and $\lambda_m (P^{-1}(x_t)) = \lambda_M^{-1} (P(x_t)) \geq \bar \gamma^{-1}$. Hence, we have $\operatorname{L}_{cl}(x_t) \preceq -\bar \gamma^{-1} P^{-1}(x_t)$  
and thus \eqref{cond2} holds, which together with \eqref{cond1} and \eqref{cond3}, imply geometric stability of the origin as given by \eqref{n-dim-geo}. 
\qedp 

A few remarks are in order. 
\par
\emph{On the initialization of the time-varying controller.} 
At time $t=0$, the constraints \eqref{n-sdp-stab3} and \eqref{n-sdp-stab4} are absent from the SDP \eqref{n-sdp}. Hence, the initial controller gain is the same as the one obtained  
by solving \eqref{n-lq} with $x=x_0$. 

\par
\emph{On the nonsingularity condition of $P(x_t) - I$. }
The additional nonsingularity condition in Theorem \ref{n-dim-stability} is equivalent to nonsingularity of the closed-loop matrix sequence $A(x_t) + B(x_t) K(x_t)$, which can be interpreted as the function mapping $x_t$ to $x_{t+1}$ is bijective   
and such that the uniqueness of solutions backward in time holds for the closed-loop system \eqref{n-close-sys} \cite[Sec. 13.2]{haddad2011nonlinear}.  
\qeds

\subsection{Recursive feasibility}\label{rec-feas-sec}

Theorem \ref{n-dim-stability} establishes stability of the SDP-based control scheme under the assumption that \eqref{n-sdp} remains feasible at each time step. Exploiting the temporal coupling introduced by \eqref{n-sdp-stab3}, we derive conditions under which feasibility at time step $t-1$ guarantees feasibility at time step $t$ for $t \ge 1$.

\subsubsection{Condition 1}

One possibility for studying recursive feasibility is to frame the changes in the dynamics of \eqref{n-sys} as consecutive deviations of the pair $(A(x_t), B(x_t))$ from the nominal dynamics $(A(x_{t-1}), B(x_{t-1}))$. 
Intuitively, for a sufficiently small deviation $\epsilon \geq 0$ with
\begin{align}\label{epsidef}
 \begin{array}{rl}
 \max\{ \specnorm{A(x_t) - A(x_{t-1})}, \specnorm{B(x_t) - B(x_{t-1})}\} \leq \epsilon
   \end{array}
\end{align}
the controller $K(x_{t-1})$ that stabilizes $(A(x_{t-1}), B(x_{t-1}))$ can stabilize $(A(x_{t}), B(x_{t}))$. 
Drawing inspiration from the certainty equivalence-based control design \cite{de2021low}, we introduce some additional notation and state the result below.

\par
For a given $t \in \mathbb N_0$ and a feasible solution $(Y(x_{t}), P(x_{t}), L(x_{t}), \gamma(x_{t}))$ to \eqref{n-sdp}, define
\begin{align}\label{bre-notation}
  \begin{array}{l}
    Z_{t} := \begin{bmatrix}
     A(x_{t}) & B(x_{t})
  \end{bmatrix}, R_{t} \coloneqq Z_{t+1} - Z_{t}, \\[3pt]
  M_{t} := \begin{bmatrix} P(x_{t}) \\ Y(x_{t}) \end{bmatrix} P^{-1}(x_{t}) \begin{bmatrix} P(x_{t}) \\ Y(x_{t}) \end{bmatrix}^\top
  \\[10pt] 
\Theta_{t} \coloneqq Z_{t} M_{t} Z_{t}^\top - P(x_{t})\\\Psi_{t}:=
R_{t} M_{t} R_{t}^\top + Z_{t}M_{t}R_{t}^\top + R_{t} M_{t} Z_{t}^\top .
    \end{array}
\end{align}
\begin{prop}\label{rec-fea-2}
Suppose \eqref{n-sdp} is feasible at time instant $t-1$, and let $(Y(x_{t-1}), P(x_{t-1}), L(x_{t-1}), \gamma(x_{t-1}))$ be its corresponding optimal solution. 
If
    \begin{align}\label{cecondition}
    \Psi_{t-1} \preceq (1- \frac{\gamma(x_{t-1})}{\bar \gamma}) I, 
\end{align}
then the solution $\eta_{t-1} (Y(x_{t-1}), P(x_{t-1}), L(x_{t-1}), \gamma(x_{t-1}))$ with 
$$\eta_{t-1} := \frac{\bar \gamma}{\gamma(x_{t-1})}$$
 is feasible for \eqref{n-sdp}.
\qeds
\end{prop}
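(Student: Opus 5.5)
The plan is to substitute the scaled tuple
\[
(Y(x_t),P(x_t),L(x_t),\gamma(x_t)) := \eta_{t-1}\,(Y(x_{t-1}),P(x_{t-1}),L(x_{t-1}),\gamma(x_{t-1}))
\]
into \eqref{n-sdp} at time $t$ and check each constraint in turn. Write $\eta:=\eta_{t-1}$, $P:=P(x_{t-1})$, $Y:=Y(x_{t-1})$. By \eqref{opt-sol-bound-P}, or directly from \eqref{n-sdp-stab1} at $t-1$, we have $P\succeq I\succ 0$. Hence $\eta>0$, and Schur complements with respect to the $P$ blocks are available.

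The key step is \eqref{n-sdp-stab1}. Since $A(x)P+B(x)Y = Z\left[\begin{smallmatrix}P\\ Y\end{smallmatrix}\right]$, its Schur complement at time $t-1$ reads
\[
P - I - Z_{t-1}M_{t-1}Z_{t-1}^\top \succeq 0, \qquad\text{that is,}\qquad \Theta_{t-1}\preceq -I.
\]
At time $t$, with the scaled variables, $M$ scales by $\eta$ because $(\eta P)^{-1}$ contributes a factor $\eta^{-1}$. The Schur complement to verify is therefore
\[
\eta P - I - \eta\, Z_t M_{t-1} Z_t^\top \succeq 0 .
\]
Using $Z_t = Z_{t-1}+R_{t-1}$, expansion gives $Z_tM_{t-1}Z_t^\top = Z_{t-1}M_{t-1}Z_{t-1}^\top + \Psi_{t-1}$. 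Hence
\[
\eta P - \eta Z_tM_{t-1}Z_t^\top = -\eta\Theta_{t-1} - \eta\Psi_{t-1} \succeq \eta I - \eta\Bigl(1-\tfrac{1}{\eta}\Bigr)I = I ,
\]
where the inequality uses $\Theta_{t-1}\preceq -I$ and \eqref{cecondition}, whose right-hand side is exactly $(1-\eta^{-1})I$. The scaled $P$ is positive definite, so \eqref{n-sdp-stab1} holds.

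The remaining constraints are routine. For \eqref{n-sdp-perf1}, the block matrix is $\eta$ times a PSD matrix, so it remains PSD. For \eqref{n-sdp-perf2}, it is linear and homogeneous in the variables, so multiplying the time-$(t-1)$ inequality by $\eta>0$ preserves it. For \eqref{n-sdp-stab4}, we get $\gamma(x_t)=\eta\gamma(x_{t-1})=\bar\gamma$.

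The delicate constraint is \eqref{n-sdp-stab3}, where $P^{-1}(x_{t-1})$ is the fixed previous optimum. Its Schur complement requires
\[
x_t^\top P^{-1}x_t - x_t^\top(\eta P)^{-1}x_t = \bigl(1-\eta^{-1}\bigr)\,x_t^\top P^{-1}x_t \ge 0 ,
\]
which holds iff $\eta\ge 1$, i.e.\ $\gamma(x_{t-1})\le\bar\gamma$. For $t-1\ge 1$ this follows from \eqref{n-sdp-stab4} at time $t-1$. For $t-1=0$ that constraint is absent, so I expect this to be the main point requiring care. I would handle it by invoking the standing requirement that $\bar\gamma$ be chosen with $\gamma(x_0)\le\bar\gamma$; without it the scaled $P$ would be too small and \eqref{n-sdp-stab3} fails. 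All constraints hold under this choice, which gives feasibility.
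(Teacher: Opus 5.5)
Your proof is correct and is essentially the paper's argument. The key step is the same rearrangement $\eta(\Theta_{t-1}+\Psi_{t-1})+I=\eta(\Theta_{t-1}+I)+(\eta\Psi_{t-1}+(1-\eta)I)\preceq 0$, and the remaining constraints are checked by homogeneous scaling. Your explicit check of \eqref{n-sdp-stab3}, and your remark that $\eta_{t-1}\ge 1$ at $t-1=0$ needs $\gamma(x_0)\le\bar\gamma$ to be assumed, spell out what the paper leaves implicit in the phrase ``with $\eta_{t-1}\geq 1$''.
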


{\it{Proof.}} Since $(Y(x_{t-1}), P(x_{t-1}), L(x_{t-1}), \gamma(x_{t-1}))$ is feasible for \eqref{n-sdp} at the time step $t-1$, then it follows from \eqref{n-sdp-stab1} and the definition of $\Theta_{t}$ in \eqref{bre-notation} that $\Theta_{t-1} + I \preceq 0$. Hence, 
\begin{align}\label{pf-us-les}
\begin{array}{rl}
    &\eta_{t-1} \Theta_{t-1} + \eta_{t-1} \Psi_{t-1} + I \\
    = & \eta_{t-1} (\Theta_{t-1} + \Psi_{t-1}) + \eta_{t-1} I + (1-\eta_{t-1}) I \\
    = &\eta_{t-1} (\Theta_{t-1} + I) + (\eta_{t-1} \Psi_{t-1} + (1-\eta_{t-1}) I) \preceq 0    
\end{array}
\end{align}
where the inequality follows from $\eta_{t-1} (\Theta_{t-1} + I) \preceq 0$ and \eqref{cecondition}. Hence $\eta_{t-1} (Y(x_{t-1}), P(x_{t-1}), L(x_{t-1}), \gamma(x_{t-1}))$ satisfies \eqref{n-sdp-stab1} at time step $t$. Moreover, by construction, $\eta_{t-1} (Y(x_{t-1}), P(x_{t-1}), L(x_{t-1}), \gamma(x_{t-1}))$ with $\eta_{t-1} \geq 1$ satisfies all the other constraints of \eqref{n-sdp}. \qedp

Proposition \ref{rec-fea-2} provides a verifiable condition for the feasibility of \eqref{n-sdp}. Yet, this condition only guarantees feasibility one step ahead. We therefore pursue a stronger condition that ensures feasibility for all time.

\subsubsection{Condition 2}

We specialize the system \eqref{n-sys} as 
\[
x_{t+1} = A(x_t) x_t + B u_t
\]
to illustrate the finding. 
Let $A(x)$ be partitioned as 
\[
A(x) = \bar A + \hat{A}(x)
\]
with $\hat{A}: \R^n \rightarrow \R^{n \times n}$ a matrix-valued function. 
Given the continuity of $A(x)$ \cite{cloutier1999recoverabilityC1}, the function $\hat{A}(x) = A(x) - \bar{A}$ is also continuous and thus bounded on any compact set $\mathcal{X} \subseteq \R^n$. 
We have the following. 
\begin{prop}\label{prop:offline:lmi:rf}
Let $\Delta \in \R^{n \times r}$ be a known matrix and $\mathcal{X} \subseteq \R^n$ a set including the origin such that 
\begin{align}\label{hatA-strength-condition} 
\hat{A}(x) \hat{A}(x)^\top \preceq \Delta \Delta^\top, \forall x \in \mathcal{X}    
\end{align}
Consider the following LMI in the decision variables $\epsilon > 0$, $P \in \mathbb{S}^{n\times n}$, and $Y \in \R^{m \times n}$: 
\begin{align}\label{converge-lmi-state-independent-B}
\begin{bmatrix}
    P - I - \epsilon \Delta \Delta^\top & \bar{A} P + B Y & 0 \\ 
    (\bar{A} P + B Y)^\top & P & P \\ 
    0 & P & \epsilon I \end{bmatrix} \succeq 0 .
\end{align}
Suppose \eqref{converge-lmi-state-independent-B} is feasible and let $(\epsilon, P, Y)$ be any feasible solution. 
Then, for any $x_0 \in \mathcal{R}_\alpha := \{x\in \R^n: x^\top P^{-1} x \le \alpha, \alpha > 0\}$ contained in $\mathcal{X}$, the time-varying SDP \eqref{n-sdp} with  
\begin{align}
    \label{eq:set:bargamma:model}
\bar \gamma \ge \operatorname{tr}(P) + 
\operatorname{tr}(Y P^{-1} Y^\top)
\end{align}
is feasible for all $t \ge 0$. \qeds
\end{prop}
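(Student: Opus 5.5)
The plan is to exhibit one fixed candidate solution, built from the feasible triple $(\epsilon,P,Y)$ of \eqref{converge-lmi-state-independent-B}, and to show it satisfies every constraint of \eqref{n-sdp} at every time step. The candidate is
\[
(Y_t,P_t,L_t,\gamma_t) := \big(Y,\;P,\;YP^{-1}Y^\top,\;\operatorname{tr}(P)+\operatorname{tr}(YP^{-1}Y^\top)\big).
\]
The argument has two parts: a robustness argument showing the constant pair $(P,Y)$ satisfies \eqref{n-sdp-stab1} for every $x\in\mathcal{X}$, and an invariance argument keeping the closed-loop trajectory inside $\mathcal{X}$ so that the robustness applies at each $x_t$.

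\textbf{Step 1 (robust version of \eqref{n-sdp-stab1}).} Take the Schur complement of \eqref{converge-lmi-state-independent-B} with respect to the $(3,3)$ block $\epsilon I$. This gives
\[
\begin{bmatrix} P-I-\epsilon\Delta\Delta^\top & \bar A P+BY\\ * & P-\epsilon^{-1}P^2\end{bmatrix}\succeq 0 .
\]
For any $x\in\mathcal{X}$, write $A(x)P+BY = (\bar AP+BY) + \hat A(x)P$. The cross term is $\begin{bmatrix}\hat A(x)\\0\end{bmatrix}\begin{bmatrix}0 & P\end{bmatrix}+(\star)^\top$. By Young's inequality it is bounded below by
\[
-\epsilon\begin{bmatrix}\hat A\hat A^\top&0\\0&0\end{bmatrix}-\epsilon^{-1}\begin{bmatrix}0&0\\0&P^2\end{bmatrix}.
\]
Applying \eqref{hatA-strength-condition}, the first term is bounded below by $-\epsilon\,\diag(\Delta\Delta^\top,0)$. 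Adding these bounds to the Schur-complemented inequality yields \eqref{n-sdp-stab1} at $x$ with $(P,Y)$. The $(1,1)$ block of that inequality also gives $P\succeq I$.

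\textbf{Step 2 (remaining constraints).} Constraint \eqref{n-sdp-perf1} holds with equality in the Schur sense because $L=YP^{-1}Y^\top$. Constraint \eqref{n-sdp-perf2} holds by the definition of $\gamma_t$, and \eqref{n-sdp-stab4} holds by \eqref{eq:set:bargamma:model}. At $t=0$ these are all the constraints, so \eqref{n-sdp} is feasible at $x_0\in\mathcal{R}_\alpha\subseteq\mathcal{X}$.

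\textbf{Step 3 (invariance and the coupling constraint).} Two requirements remain for $t\ge1$: the state must satisfy $x_t\in\mathcal{X}$, and the coupling constraint \eqref{n-sdp-stab3} must hold, i.e.\ $x_t^\top P^{-1}x_t\le x_t^\top P^{-1}(x_{t-1})x_t$, where $P(x_{t-1})$ is the \emph{optimal} matrix from the previous step, not the constant $P$. This is the main obstacle.

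My first attempt is an induction using the sublevel set of $x^\top P^{-1}x$. Step 1 gives $A_{cl}^\top P^{-1}A_{cl}\prec P^{-1}$ for the constant gain $YP^{-1}$, so $\mathcal{R}_\alpha$ is invariant under that gain. The difficulty is that the applied gain is the optimal one, $Y(x_t)P^{-1}(x_t)$. I would therefore work with the chain of inequalities $V_{t}(x_{t+1})\le V_t(x_t)$ and $V_{t+1}(x_{t+1})\le V_t(x_{t+1})$ from the proof of Theorem \ref{n-dim-stability}. The aim is to show the trajectory stays in $\mathcal{X}$, using $I\preceq P(x_t)\preceq\bar\gamma I$ from \eqref{opt-sol-bound-P} together with the containment $\mathcal{R}_\alpha\subseteq\mathcal{X}$.

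For \eqref{n-sdp-stab3}, I would try to exploit the freedom in the candidate. One option is to use a convex combination, or a maximum in the L\"owner sense, of $P$ and $P(x_{t-1})$, mirroring the $\tilde p$ construction \eqref{p-def-scalar} of Theorem \ref{RoA-lyap-scalar}. Such a candidate keeps \eqref{n-sdp-stab3} satisfied while still inheriting the robust inequality of Step 1, since \eqref{n-sdp-stab1} is convex jointly in $(P,Y)$. A further check is needed here: the trace bound \eqref{n-sdp-stab4} must survive the combination. If it does not, I would strengthen the argument by scaling the candidate as in Proposition \ref{rec-fea-2}.
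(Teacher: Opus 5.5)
\textbf{Steps 1 and 2 agree with the paper; Step 3 is where the proposal has a gap.} Your Young-inequality bound on the cross term $\hat A(x)P$ is exactly the easy direction of the nonstrict Petersen lemma that the paper cites. The paper also uses the same constant candidate $(Y,P,YP^{-1}Y^\top,\operatorname{tr}P+\operatorname{tr}YP^{-1}Y^\top)$. Step 3, however, ends without proving either that $x_t\in\mathcal{X}$ for all $t$ or that \eqref{n-sdp-stab3} holds, and none of the repairs you sketch works:
\begin{itemize}
\item A L\"owner maximum of $P$ and $P(x_{t-1})$ does not exist for general matrices. An upper bound is not a convex combination, so joint convexity of \eqref{n-sdp-stab1} says nothing about it. Even for $n=1$, where the maximum exists, its cost $\max\{p,p_{t-1}\}(1+k^2)$ exceeds $\bar\gamma=p(1+k^2)$ whenever $p_{t-1}>p$.
\item A convex combination with $(P(x_{t-1}),Y(x_{t-1}))$ does not inherit \eqref{n-sdp-stab1} at $x_t$, because that pair is certified only at $x_{t-1}$. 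It does not rescue \eqref{n-sdp-stab3} either: already for $n=1$ and $\theta>0$, $\theta p+(1-\theta)p_{t-1}\ge p_{t-1}$ holds iff $p\ge p_{t-1}$.
\item Scaling by $\eta>1$ as in Proposition \ref{rec-fea-2} multiplies the cost by $\eta$. This breaks \eqref{n-sdp-stab4} when $\bar\gamma$ equals the bound in \eqref{eq:set:bargamma:model}.
\item The Lyapunov chain of Theorem \ref{n-dim-stability} only gives $|x_t|^2\le\bar\gamma\,x_0^\top P^{-1}(x_0)x_0\le\bar\gamma|x_0|^2$. The hypothesis $\mathcal{R}_\alpha\subseteq\mathcal{X}$ does not control this bound, so this route yields a shrunken set like $B_\delta$ in Theorem \ref{RoA-lyap-scalar}, not $\mathcal{R}_\alpha$.
\end{itemize}

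\textbf{How the paper closes Step 3.} It uses the fact you state and then set aside. Retracing the proof of Proposition \ref{redef-sdp-property} from Step 1 gives $(A(x)+BK)^\top P^{-1}(A(x)+BK)\prec P^{-1}$ for all $x\in\mathcal{X}$, with $K=YP^{-1}$. Hence $x^\top P^{-1}x$ strictly decreases along $x^+=(A(x)+BK)x$, and by induction $x_t\in\mathcal{R}_\alpha\subseteq\mathcal{X}$ for all $t$. The constant tuple is then taken as the solution at \emph{every} step, including $t=0$. The matrix entering \eqref{n-sdp-stab3} is therefore $P$ itself, whose Schur complement is $x_t^\top P^{-1}x_t-x_t^\top P^{-1}x_t=0$, so the constraint holds trivially. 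Steps 1--2 then give feasibility at each $x_t$. To match the paper, you only need to adopt this reading and add that observation.

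\textbf{Your objection is legitimate.} The paper's argument does not treat the case where an optimal $P(x_{t-1})\neq P$ enters \eqref{n-sdp-stab3} while the optimal gain drives the state. That stronger claim would need a bound $x_t^\top P^{-1}x_t\le x_t^\top P^{-1}(x_{t-1})x_t$ relating the fixed $P$ to the previous optimal matrix along the trajectory. Neither the paper nor your sketches supply it. As written, your proposal proves neither version of the statement.
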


{\it{Proof.}} By Schur complement, \eqref{converge-lmi-state-independent-B} implies 
\[
\begin{array}{rl}
&\begin{bmatrix}
    P - I - \epsilon \Delta \Delta^\top & \bar{A} P + BY \\ (\bar{A} P + BY)^\top & P
\end{bmatrix} - \epsilon^{-1}
\begin{bmatrix} 0 \\ P \end{bmatrix} \begin{bmatrix} 0 & P \end{bmatrix} 
\\
= 
&\begin{bmatrix}
    P - I & \bar{A} P + BY \\ (\bar{A} P + BY)^\top & P
\end{bmatrix} - \epsilon \begin{bmatrix}
    -I \\ 0 
\end{bmatrix} \Delta \Delta^\top
\begin{bmatrix}
    -I & 0 
\end{bmatrix} 
\\
& - \epsilon^{-1}
\begin{bmatrix} 0 \\ P \end{bmatrix} \begin{bmatrix} 0 & P \end{bmatrix} \succeq 0.
\end{array}
\]
Then, by the nonstrict Petersen's lemma \cite[Fact 2]{bisoffi2022pertersen}, for any matrix $\hat{A}(x)$ satisfying \eqref{hatA-strength-condition}, it holds that 
\[
\begin{array}{rl}
\begin{bmatrix}
    P - I & \bar{A} P + BY \\ (\bar{A} P + BY)^\top & P
\end{bmatrix} - \begin{bmatrix} 0 \\ P
\end{bmatrix} \hat{A}^\top(x) \begin{bmatrix}
    -I & 0 
\end{bmatrix} \\
- \begin{bmatrix}
    -I \\ 0
\end{bmatrix} \hat{A}(x) \begin{bmatrix} 0 
& P \end{bmatrix} \succeq 0\end{array}
\]
and thus 
\[
\begin{bmatrix}
    P - I & \bar{A} P + \hat{A}(x) P + BY \\ (\bar{A} P + \hat{A}(x) P + BY)^\top & P
\end{bmatrix} \succeq 0
\]
By another use of Schur complement, we obtain $P - I - (\bar{A} P + \hat{A}(x) P + BY) P^{-1} (\bar{A} P + \hat{A}(x) P + BY)^\top \succeq 0$. 
Let $K := Y P^{-1}$. It holds that 
$P - I - (\bar{A} + \hat{A}(x) + BK) P (\bar{A} + \hat{A}(x) + BK)^\top \succeq 0, \forall x \in \mathcal{X}.$
By retracing the same steps as in the proof of Proposition \ref{redef-sdp-property} with $A_{cl}(x_t)$ and $P(x_t)$ replaced by $\bar A+\hat{A}(x) + BK$ and $P$, the inequality \eqref{decre-lyap} holds. 
Then, the solution to the closed-loop system 
$ x^+ = (A(x) + BK) x = (\bar A + \hat{A}(x) + BK)x$
with $x_0 \in \mathcal{R}_\alpha$ contained in $\mathcal{X}$, 
satisfies $V(x^+) - V(x) < 0, \forall x \ne 0$
with $V(x) = x^\top P^{-1} x$. 
Note that the solution $Y(x_t) \equiv KP$, $P(x_t) \equiv P$, $L(x_t) \equiv KPK^\top$, and $\gamma(x_t) \equiv \operatorname{tr}(P) + \operatorname{tr}(KPK^\top)$ is feasible to the time-varying SDP \eqref{converge-lmi-state-independent-B} with $x = x_t$ for all $t \ge 0$. The thesis is proven.  
\qedp

Proposition \ref{prop:offline:lmi:rf} presents an offline LMI whose solution yields RoA estimates for the closed-loop system
\[
x_{t+1} = (A(x_t) + B K(x_t) ) x_t
\]
where $K(x_t) := Y(x_t) P(x_t)^{-1}$ and $(Y(x_t), P(x_t))$ are optimal solutions for \eqref{n-sdp}. 
Proposition \ref{prop:offline:lmi:rf} thus complements Theorem \ref{thm-lq-scalar-bound} by providing a computable RoA estimate from which convergence to the origin is guaranteed. 
Proposition \ref{prop:offline:lmi:rf} further extends to systems with state dependent input vector fields $B(x)$, see Appendix \ref{appdx:rf:Bxcase}.  
\qedp

\section{Case study}\label{case-sec}

In this section, three systems with different types of nonlinear dynamics are presented to show the effectiveness of the proposed optimization-based online control method. 

\subsection{Single-machine infinite-bus power system}\label{case-SMIB}

A classical scenario in the study of the synchronous machine (SM) based power systems is the single-machine infinite bus (SMIB) \cite{kundorbook}. The analysis and (learning-based) control design of SMIB is fundamental for SM-based power systems \cite{barabanov2017conditions, verrelli2021nonlinear, jiang2014robust}. We use a third-order SM model \cite[eq. 5.11]{claudio2019energy} and include a power control loop to illustrate the effectiveness of the proposed online nonlinear control method. 
We consider 
\begin{align}
\begin{array}{rl}
     \dot \delta &= \Delta \omega \\
    M \dot{\Delta \omega} &= P_{m} - D \Delta \omega + B_s E_{q}^{\prime} V_s \sin \delta \\
    T_{do}^{\prime} \dot E_{q}^{\prime} &= E_f - E_{q}^{\prime} + (X_d - X_d^{\prime}) (B_s E_{q}^{\prime} - B_s V_s \cos \delta)    \\
    \dot P_m &= -\frac{1}{T_G} P_m - K_G \Delta \omega + v
\end{array}
\end{align}
with state variables 
\begin{itemize}
  \item[-] $\delta$: rotor angle with respect to (w.r.t.) synchronously rotating reference frame,  
  \item[-] $\Delta \omega$: frequency deviation w.r.t. synchronous frequency, 
  \item[-] $ E_q^{\prime} $: the $q$-axis component of the transient emf, 
  \item[-] $P_m$: mechanical input power, 
\end{itemize}
and system parameters 
\par
\begin{itemize}
  \item[-] $B_s$: susceptance of the transmission line between the SM and the infinite bus, 
 \item[-] $V_s$: voltage of infinite bus,  
 \item[-] $M$: moment of inertia, 
 \item[-] $D$: asynchronous damping constant, 
\item[-]  $T_{do}$: direct axis transient open-circuit constant, 
\item[-]  $X_d$: direct synchronous reactance,  
\item[-]  $X_d^{\prime}$: direct transient reactance, 
\item[-]  $E_f$: excitation voltage. 
\end{itemize}

Next, we denote the steady state as $(\bar \delta, 0, \bar E_{q}^{\prime}, \bar P)$ with $\bar v$ the constant input satisfying $\bar v = \frac{1}{T_G} \bar P$. 
Let 
\[
x:= \col(\delta- \bar \delta, \Delta \omega, E_{q}^{\prime} - \bar E_{q}^{\prime}, P_m - \bar P), \quad u := v - \bar v. 
\]
We refer to the $i$th state variable as $x(i)$. 
Then, by using the forward Euler discretization and a sampling time $T$, we have the discrete-time model in \eqref{n-sys} with 
\begin{align*}
\small{A(x) :=} \left[
 \begin{smallmatrix}
        1 & T & 0 & 0 \\[6pt]  
        b_1 T \frac{(\cos(\delta_{0} + \frac{x(1)}{2}) \sin(\frac{x(1)}{2}))}{x(1)} & 1-b_2 T & b_3 T \sin(x(1) + \delta_0) & b_4 T \\[6pt] 
        c_1 T \frac{\sin(\delta_0+\frac{x(1)}{2}) \sin(\frac{x(1)}{2})}{x(1)} & 0 & 1+ c_2 T  & 0 \\[6pt]  0 & -d_1 T & 0 & 1 -d_2 T  
 \end{smallmatrix}
\right]
\end{align*}
if $x(1) \neq 0$, and
\begin{align*}
    A(x) :=  \left[
 \begin{smallmatrix}
        1 & T & 0 & 0 \\ \frac{b_1 T \cos(\delta_0)}{2} & 1-b_2 T & b_3 T \sin(\delta_0) & b_4 T \\ \frac{c_1 T \sin(\delta_0)}{2}  & 0 & 1+T c_2 & 0 \\ 0 & -d_1 T & 0 & 1 -d_2 T  
    \end{smallmatrix}
\right]
\end{align*}
if $x(1) = 0$, where $b_1 = \frac{2 B_s}{M} \bar E_{q}^{\prime} V_s$, $b_2 = \frac{D}{M}$, $b_3 = \frac{B_s}{M} V_s$, $b_4 = \frac{1}{M}$, $c_1 = 2\frac{(X_d - X_d^{\prime})B_s V_s}{T_{do}^{\prime}}$, $c_2 = \frac{(X_d - X_d^{\prime})B_s - 1}{T_{do}^{\prime}}$, $d_1 = K_G$, $d_2 = \frac{1}{T_G}$.
The input vector field is $B := \col(0,0,0,T)$.

\par 
We set the parameters as follows: $X_d = 1.863$, $X_d^\prime = 0.257$, $B_{s} = -0.404$, $T_{do}^{\prime} = 0.5 s$, $\delta_0 = 0.236$, $\omega_0 = 50 Hz$, $D = 0.5$, $M = 1.25$,  
$E_{q0}^{\prime} = 1$, $V_s = 1$, $K_G = 1$, $T_G = 0.1$, $T = 0.1$. Moreover, we consider weighting matrices $Q = \diag(10, 8, 0.1, 0.1)$, $R = 1$. 
The initial condition of $x(1)$ is taken as a random variable within $[-2(\pi-\delta_0), 2(\pi-\delta_0)]$. The initial conditions of $x(2)$, $x(3)$, and $x(4)$ are taken as random variables within $[-1, 1]^3$. 
Solving \eqref{n-sdp} with $t = 0$, we obtain the optimal solution $\gamma(x_0)$ and $P(x_0)$. We then define $\bar \gamma := \frac{\lambda_M(P(x_0))}{\lambda_m(P(x_0))} \gamma(x_0)$. 

We consider 4 scenarios: open-loop (with $u_t = 0$), closed-loop with a time-invariant controller, closed-loop with the online controller that solves \eqref{n-lq} along the trajectory, and closed-loop with the online controller that solves \eqref{n-sdp} where \eqref{n-sdp-perf2} is modified to $\gamma(x_t) \geq \operatorname{tr}(QP(x_t))+ \operatorname{tr}(RL(x_t))$. 
The time-invariant controller used in the second scenario is the LQR controller for the linearized pair. The SDP \eqref{n-lq} and \eqref{n-sdp} are solved using MOSEK \cite{aps2019mosek}. 

Figure \ref{smis-x-large-1} displays the evolution of the state. To compare the performance quantitatively, we consider the performance index 
\begin{align}\label{perf-ind}
    J = \sum_{t=0}^{T_{end}-1} u_t^\top R u_t + x_t^\top Q x_t
\end{align}
with $T_{end} = 200$. The result is as follows: $J_1 = 955.1222$, $J_2 = 3.0122\times 10^{3}$, $J_3 = 703.0905$, $J_4 = 714.9147$, where the subscript $i$ of $J$ corresponds to the $i$th scenario. 
The online controller solving \eqref{n-sdp} achieves regulation to the origin with mild performance degradation relative to \eqref{n-lq}, while guaranteeing closed-loop stability.

\begin{figure}[ht!]
\centering
{{\includegraphics[width=6cm]{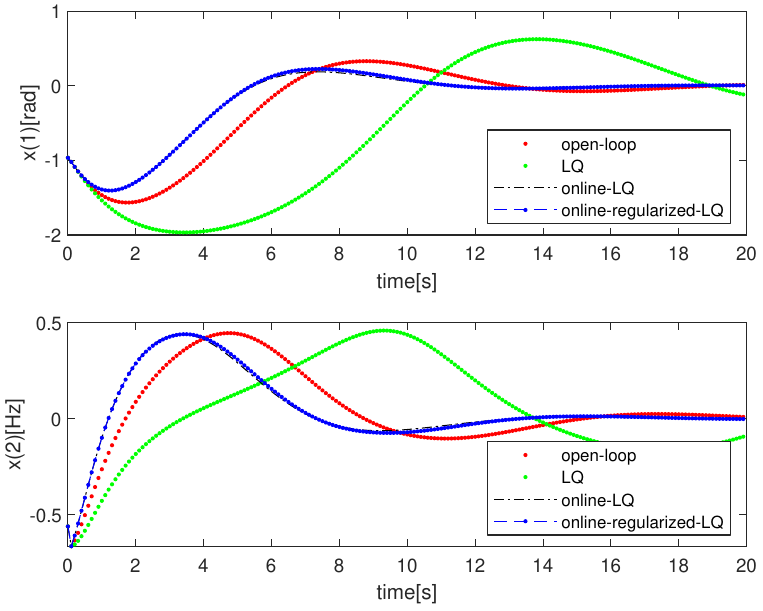} }}%
{{\includegraphics[width=6cm]{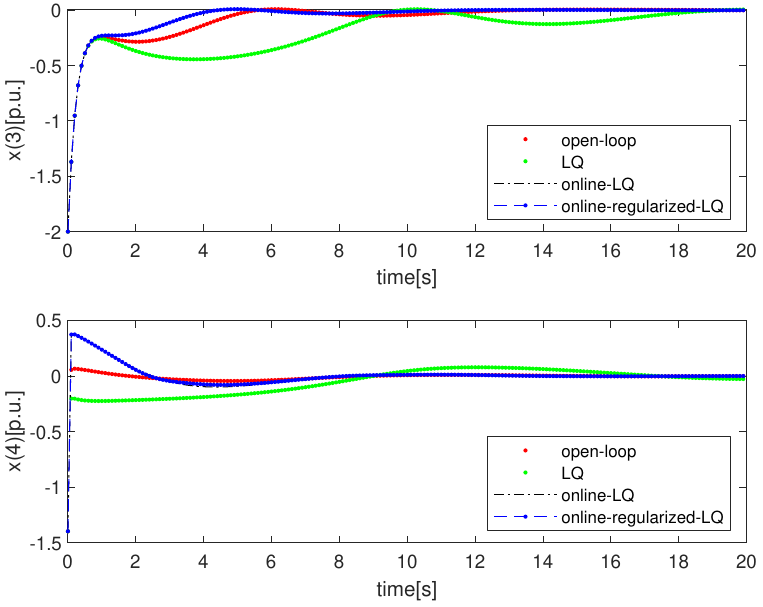} }}%
    \caption{The evolution of the state variables with the initial condition \\
    $x_0 = \col(-0.9644, -0.5594, -1.9998, -1.3953)$.}%
\label{smis-x-large-1}%
\end{figure}%

\subsection{Jet Engine}

As a representative polynomial system, we use the jet engine dynamics from \cite[eq. 2.208-2.210]{krstic1995nonlinear} to test our SDP-based online control scheme. The continuous system is discretized using forward Euler with sampling time $T$. 
Specifically, we have \eqref{n-sys} with 
\begin{align*}
   A(x) = \left[
    \begin{smallmatrix}
        1-\sigma T x(1) & - 2 \sigma T x(1) -\sigma T x(1) x(2)  & 0 \\ -3T & 1-3T x(1) - 1.5T x(2) - 0.5 T x^2(2) & -T \\ 
        0 & 0 & 1       
    \end{smallmatrix}\right]
\end{align*}
and $B = \col(0, 0, -T)$. 

\par
Let $\sigma = 3$, $T = 0.1$. 
The initial condition of $x(1)$, i.e. the mass flow, is taken as a random variable within $[0, 1]$. The initial conditions of $x(2)$ and $x(3)$ are taken as random variables within $[-1, 1]^2$. As in the first example, let $\bar \gamma := \frac{\lambda_M(P(x_0))}{\lambda_m(P(x_0))} \gamma(x_0)$ and apply the result in Theorem \ref{n-dim-stability} to design the controller. Note that the origin of the open-loop system is unstable. 
To compare the transient performance, we further simulate the evolution of the state of the closed-loop system \eqref{n-close-sys} where $(K(x_t), P(x_t))$ is the unique optimal solution for \eqref{n-lq} with $x=x_t$ and compute the value of $V_1(x_t) := x_t^\top P^{-1}(x_t) x_t$. 
\par
As shown in Fig.~\ref{jet-engine-state-evo}, both controllers regulate the state variables to the origin. Consider the performance index $J$ defined as in \eqref{perf-ind} with $T_{end} = 100$. The performance index for the system with the online controller solving \eqref{n-sdp} is obtained as $4.9483$, whereas that of the online controller solving \eqref{n-lq} is computed as $3.5569$. 
Regarding stability, the controller solving \eqref{n-sdp} provides a Lyapunov certificate $V_t(x_t)$ (defined in \eqref{lyap-def-n-dim}) that decreases \emph{monotonically} along the trajectory, while the controller solving \eqref{n-lq} yields $V_1(x_t)$ which does not exhibit monotonic decrease.

\begin{figure}[hbt]
\centering
{{\includegraphics[width=6cm]{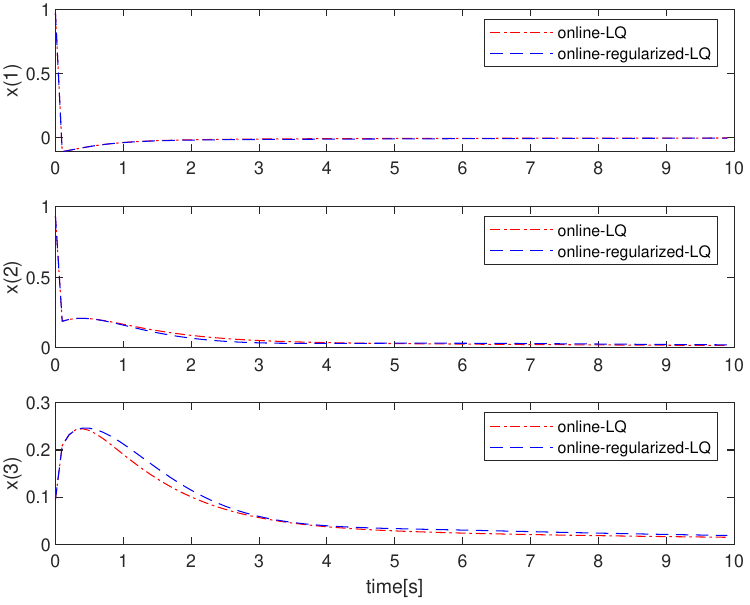} }}%
{{\includegraphics[width=6cm]{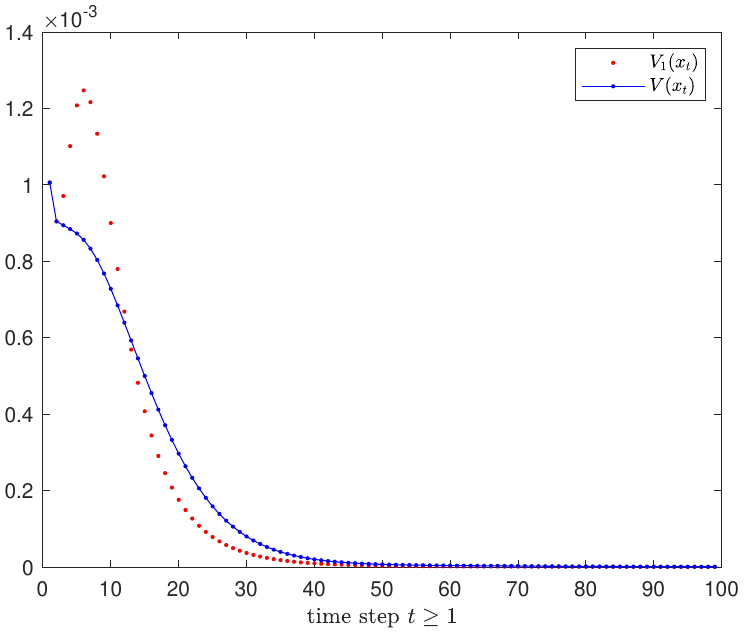} }}%
\caption{Experimental results with $x_0 = (0.9727, 0.9341, 0.0945)^\top$.}%
\label{jet-engine-state-evo}%
\end{figure}

\subsection{Inverted pendulum}
 Finally, we consider the Euler discretization of an inverted pendulum represented in \eqref{n-sys} with $B(x) = \col (0, \frac{T \cos x(1)}{m \ell })$, 
\begin{align*}
   A (x) = \left[
     \begin{smallmatrix} 1 & T \\ \frac{T g \operatorname{sin} x(1)}{\ell x(1)} & 1- \frac{T \mu}{m \ell^2} \end{smallmatrix}\right], \quad x(1) \neq 0     
\end{align*}
where the left-bottom is replaced by $\frac{Tg}{\ell}$ if $x(1) = 0$. 
Next, let $T = 0.1$, $g = 9.8$, $m=1$, $\ell = 1$, $\mu = 0.01$. 
The initial condition is taken as random variables within $x_1(0) \in [-\frac{\pi}{2}, \frac{\pi}{2}]$, $x_2(0) \in [-3, 3]$. 
As shown in Fig.~\ref{invpenstap}, the state variables converge to the origin. 
\begin{figure}[ht!]
\centering
{{\includegraphics[width=6cm]{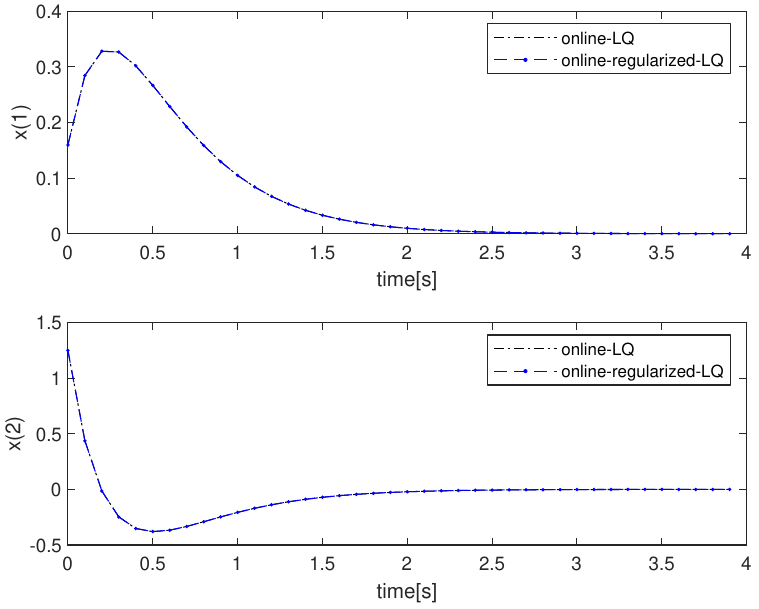} }}%
    \caption{Experimental results with $x_0 = \col(0.1596, 1.2489)$, $\bar \gamma := \frac{\lambda_M(P(x_0))}{\lambda_m (P(x_0))} \gamma(x_0)$. }%
    \label{invpenstap}%
\end{figure}

The simulation demonstrates the effectiveness and flexibility of the time-varying SDP-based online control method. We note that the controller solving \eqref{n-sdp} guarantees closed-loop geometric stability with mild performance degradation compared to the controller solving \eqref{n-lq}.

\section{Conclusions}\label{conclude-sec}

This paper presents a time-varying SDP-based method for stabilizing input-affine nonlinear systems with quadratic performance objectives. The core contributions lie in the formulation of SDPs with explicitly imposed time-varying Lyapunov stability conditions and performance specifications, and in the derivation of compact conditions certifying the recursive feasibility of the proposed SDP sequence. 
The method is generalizable to settings with uncertain model knowledge \cite{ddnijrnc}.  
Future research includes considering nonlinear optimal control problems, where the value function serves as a control Lyapunov function \cite{freeman1995optimal}. 

\section*{Acknowledgements}
The author thanks Prof. C. De Persis and Prof. N. Monshizadeh for valuable discussions and feedback.

\appendix

\subsection{Proof of Proposition \ref{riccati-sdp-lq-property}}
\label{appdx:proof:formulation-relation} 

Since $(A, B)$ is stabilizable, there exists a matrix $K$ that makes $A+BK$ Schur. Then, the matrix 
\begin{align}\label{P-def}
    P := \sum_{i=0}^{\infty} (A+BK)^i ((A+BK)^\top)^i 
\end{align}
is well-defined. Next, let 
\begin{align}\label{L-gamma-def}
    L := KPK^\top, \quad \gamma := \tr(RL) + \tr(QP).
\end{align}
The solution $(K, P, L, \gamma)$ is a feasible solution to \eqref{lmi-lq} since it satisfies the constraints \eqref{lti-stab1}-\eqref{lti-perf2}. 

Next, we specialize the stabilizing $K$ to the one defined as in \eqref{lti-lq}, which is well-defined since a unique $P_r \succ 0$ in \eqref{lti-lq} exists as $(A, B)$ is stabilizable and $(Q^{1/2}, A)$ is observable \cite[Thm. 6.3.2]{chen1995optimal}. 
We denote this choice of $K$ by $K_*$, and note that $K_*$ is the unique minimizer of $\mathcal{H}_2$ norm of the system \eqref{h2phisys} \cite[Sec. 6.4]{chen1995optimal}.  Consistently, we denote by $P_*$, $L_*$, $\gamma_*$, the quantities defined in \eqref{P-def} and \eqref{L-gamma-def} with $K=K_*$. Recall that $(K_*, P_*, L_*, \gamma_*)$ is a feasible solution to \eqref{lmi-lq}. Next, we show that this feasible solution is optimal.
\par 

Let $(K_*, \bar P, \bar L, \bar \gamma)$ be an optimal solution to \eqref{lmi-lq}. 
By \eqref{lti-stab1}, we have 
$(A+BK_*) \bar P  (A+BK_*)^\top - \bar P + I + M = 0$ for some matrix $M\succeq0$, and thus 
\begin{align}\label{p-fea-def}
    \bar P = \sum_{i=0}^{\infty} (A+BK_*)^i (I+M) ((A+BK_*)^\top)^i \succeq P_* .
\end{align}
Then, we have
\begin{align}\label{useful}
\small
{\begin{array}{l}
  \bar \gamma \stackrel{\eqref{lti-perf2}}{\geq} \tr(Q \bar P) + \tr (R\bar L)
  \stackrel{\eqref{lti-perf1}}{\geq} \tr(Q \bar P) + \tr (RK_* \bar P K_*^\top) \stackrel{\eqref{p-fea-def}}{\geq}  \\ 
 \tr(QP_*) + \tr(RK_* P_* K_*^\top) \stackrel{\eqref{L-gamma-def}}{=} \tr(QP_*) + \tr(RL_*) \stackrel{\eqref{L-gamma-def}}{=} \gamma_* . 
\end{array}}
\end{align}
By optimality, we conclude that $\bar \gamma =\gamma^*$.
Then, by \eqref{useful} and noting $\bar P \succeq P_*$, it follows that 
$\bar P = P_*, \bar L = L_*. $ 
This shows that $(K_*, P_*, L_*, \gamma_*)$ is the unique optimal solution to \eqref{lmi-lq} with the corresponding cost $\gamma_*$. 

 Next, we show that $\gamma_* = \tr(P_r)$. By rewriting \eqref{riccati-lti-lq} as 
 $(A+BK_*)^\top P_r (A+BK_{*}) - P_r + Q + K_{*}^\top R K_{*} = 0, $
 we have $P_r = \sum_{i=0}^\infty ((A+BK_{*})^{i})^\top (Q+K_{*}^{\top} R K_{*})) (A+BK_{*})^i$. 
 Next, by the properties of the $\tr(\cdot)$ operator, it follows that 
\[
      {\small
 \begin{aligned}
     \tr(P_r) &= \tr \sum_{i=0}^\infty ((A+BK_{*})^{i})^\top ( Q +K_{*}^{\top} R K_{*})) (A+BK_{*})^{i} 
 \\ &= \tr \sum_{i=0}^\infty ((A+BK_{*})^{i})^\top \left[\begin{smallmatrix} Q^{1/2} & K_{*}^\top R^{1/2} \end{smallmatrix}\right] \cdot I  
     (\star)^\top
 \\ &= \tr \sum_{i=0}^\infty \left[\begin{smallmatrix} Q^{1/2} \\ R^{1/2} K_{*} \end{smallmatrix}\right]   
     (A+BK_{*})^i  \cdot I
     (\star)^\top  \\
&=  \tr \left[\begin{smallmatrix} Q^{1/2} \\ R^{1/2} K_{*} \end{smallmatrix}\right] 
 \underbrace{\sum_{i=0}^\infty   
     (A+BK_{*})^i  \cdot  I
     (\star)^\top  }_{= P_* }  \left[\begin{smallmatrix} Q^{1/2} \\ R^{1/2} K_{*} \end{smallmatrix}\right]^\top \\
&=  \tr  \left[\begin{smallmatrix} Q^{1/2} P_* Q^{1/2} & Q^{1/2} P_* K_*^\top R^{1/2} \\ * & R^{1/2} K_{*} P_* K_*^\top R^{1/2} \end{smallmatrix}\right] \\ 
&= \tr(QP_*) + \tr(RK_* P_* K_*^\top) = \gamma_*
 \end{aligned}}
\]
 which establishes the claim.
\qedp

\subsection{Proof of Proposition \ref{thm-lq-scalar-bound}}\label{pf:prop:bargamma-analytic-scalar}
Note that the scalar system $(a(x), b(x))$ is stabilizable for any $x\in \mathbb{R}$ with $b(x) \neq 0$. By Proposition \ref{riccati-sdp-lq-property}, there exists a unique optimal solution $(k(x), p(x), \ell(x), \gamma(x))$ to \eqref{scalar-lq} satisfying $\gamma(x) = p_r(x)$ where 
\begin{align}\label{xr-sol}
\begin{array}{l}
p_r(x) = 
\frac{a^2(x)+b^2(x)-1+\sqrt{(a^2(x)+b^2(x)-1)^2 + 4b^2(x)}}{2b^2(x)}
\end{array}
\end{align}
is the unique positive definite solution to the Riccati equation \eqref{riccati-lti-lq} with $(A, B)$ replaced by $(a(x), b(x))$. 
By the definition of $p_r$ in \eqref{xr-sol}, we have 
\begin{align}\label{xr-a-b-leq}
{\begin{array}{rl}
    p_r(x) & \stackrel{\eqref{xr-sol}}{=}  \frac{a^2(x) + b^2(x)-1+\sqrt{(a^2(x)+b^2(x)-1)^2 + 4b^2(x)}}{2b^2(x)} \\
      &\leq \frac{a^2(x) + b^2(x)-1 + (a^2(x) + b^2(x)+1)}{2b^2(x)}
    \\ &= \frac{2a^2(x) +2b^2(x)}{2b^2(x)} = 1+\frac{a^2(x)}{b^2(x)}
\end{array}}
\end{align} 
By Assumption \ref{assum-bx-bbarbelow} and \eqref{e:L}, for every $x\in \mathcal{X}$, we have $p_r(x) \stackrel {\eqref{xr-a-b-leq}}{\leq} 1+\frac{a^2(x)}{b^2(x)} \leq
1+\frac{L^2}{\underline b^2}.$ 
Recalling that $\gamma(x)=p_r(x)$, the upper bound in \eqref{gamma-scalar-bound} is established. 
Next, bearing in mind $k(x)$ is equal to the control gain in \eqref{lti-lq} with $A, B, P_r$ replaced by $a(x), b(x), p_r(x)$, we can rewrite the Riccati equation \eqref{riccati-lti-lq} with coefficients $(a(x), b(x))$ as $(a(x)+b(x)k(x))^2 p_r(x)-p_r(x) + 1+k^2(x) = 0.$ 
By computing $p_r(x)$ from the equation, we observe that
$ p_r(x) = \frac{1+k^2(x)}{1-(a(x) + b(x)k(x))^2}.$ 
Moreover, by specializing \eqref{P-def} to the scalar case, we have  
\begin{align}\label{p-lq-def}
    p(x)= \frac{1}{1-(a(x)+b(x)k(x))^2}
\end{align}
and thus
\begin{align}\label{x-p-rel-scalar}
       p_r(x) = (1+k^2(x))p(x). 
\end{align}
Therefore, we find that
\begin{align}\label{p-lq-bound-1}
    \gamma(x) = (1+k^2(x))p(x) \geq p(x) \geq 1,
\end{align}
where the second inequality follows from \eqref{p-lq-def} and the fact that $a(x)+b(x)k(x)$ is Schur. 
This completes the proof. 
\qedp

\subsection{Proof of Proposition \ref{redef-sdp-property}}\label{pf:prop:regu-sdp}
First, we show that the feasibility set of \eqref{n-sdp} is bounded. 
By \eqref{n-sdp-stab1}, we have $P(x_t) \succeq I$ and thus, from \eqref{n-sdp-perf2}, it follows that $\gamma(x_t)  \geq \operatorname{tr}(P(x_t) ) \geq \operatorname{tr}(I_n) \geq n$. 
Next, recall that $\specnorm{M} \leq \operatorname{tr}(M)$ for any positive semidefinite matrix $M$. Then, $\specnorm{P(x_t)} \leq \operatorname{tr}(P(x_t)) \stackrel{\eqref{n-sdp-perf2}}{\leq} \gamma(x_t) \stackrel{\eqref{n-sdp-stab4}}{\leq} \bar \gamma $.
Analogously, it gives $\specnorm{L(x_t)} \leq \operatorname{tr}(L(x_t)) \stackrel{\eqref{n-sdp-perf2}}{\leq}
\gamma(x_t) - \operatorname{tr}(P(x_t))  \stackrel{\eqref{n-sdp-stab4}}{\leq} \bar \gamma-n$. 
Next, we show that $\specnorm{Y(x_t)}^2 \leq \bar \gamma (\bar \gamma-n)$. By \eqref{n-sdp-perf1}, we have 
\[
Y(x_t) P^{-1}(x_t) Y^\top(x_t) \preceq L(x_t)
\]
and thus $\operatorname{tr} (Y(x_t) P^{-1}(x_t) Y^\top(x_t)) \leq \operatorname{tr}(L(x_t)) \leq \bar \gamma - n$. Moreover, since $P^{-1}(x_t) \succeq \lambda_m (P^{-1}(x_t)) I_n$, this gives 
\[{
\operatorname{tr}(Y(x_t) P^{-1}(x_t) Y^\top(x_t)) \geq \lambda_m (P^{-1}(x_t)) \operatorname{tr}(Y^\top(x_t) Y(x_t)).}
\]  
Then 
\[
\begin{array}{rl}
\operatorname{tr}(Y(x_t) P^{-1}(x_t) Y^\top(x_t)) \geq \lambda_m (P^{-1}(x_t)) \operatorname{tr}(Y^\top (x_t) Y(x_t)) 
\\
= \lambda_M^{-1} (P(x_t)) \operatorname{tr}(Y^\top(x_t) Y(x_t)) \geq \bar \gamma^{-1} \operatorname{tr}(Y^\top(x_t) Y(x_t)).
\end{array}
\]
Hence, $\operatorname{tr}(Y^\top(x_t) Y(x_t)) \leq \bar \gamma (\bar \gamma - n)$, and thus  $\specnorm{Y(x_t)}^2 \leq \operatorname{tr}(Y^\top (x_t)Y(x_t)) \leq \bar \gamma (\bar \gamma - n)$. 
This completes the proof of the fact that the feasibility set is bounded. Therefore, bearing in mind that the feasibility set is nonempty by assumption, we find that there exists an optimal solution $(Y(x_t), P(x_t), L(x_t), \gamma(x_t))$ to the convex program \eqref{n-sdp}. 
\par
Next, we show that any optimal solution $(Y(x_t), P(x_t), L(x_t), \gamma(x_t))$ to \eqref{n-sdp} has the properties claimed in \eqref{opt-sol-bound-P} and \eqref{decre-lyap}. 
By recalling the definition of $A_{cl}(x_t)$ in \eqref{n-close-sys} and using Schur complement, \eqref{n-sdp-stab1} implies $P(x_t) \succeq I$ and
\begin{align}\label{pf-useful-kkkkkk}
  A_{cl}(x_t) P(x_t)A_{cl}^\top(x_t) -  P(x_t) + I \preceq 0. 
\end{align}
By applying a congruent transformation, the above matrix inequality can be equivalently expressed as 
\begin{align}\label{guodu}
    P^{-1}(x_t) A_{cl}(x_t)\cdot P(x_t)  (\star)^\top - P^{-1}(x_t) + P^{-2}(x_t) \preceq 0 .
\end{align}
and thus $P^{-1}(x_t) A_{cl}(x_t)\cdot P(x_t)  (\star)^\top - P^{-1}(x_t) \prec 0$ since $P^{-2}(x_t) \succ 0$. Then, we have 
\[
\left[ \begin{smallmatrix}
    -P^{-1}(x_t) & A_{cl}^\top(x_t) P^{-1}(x_t) \\ P^{-1}(x_t) A_{cl}(x_t) 
     & - P^{-1}(x_t)
\end{smallmatrix} \right] \prec 0
\]
By Schur complement, the inequality \eqref{decre-lyap} holds. 
Finally, we show that under the nonsingularity condition on $P(x_t) - I$, \eqref{decre-geo-lyap} holds. By \eqref{guodu}, it holds that
$ P^{-1}(x_t) A_{cl}(x_t)\cdot P(x_t)  (\star)^\top \preceq P^{-1}(x_t) - P^{-2}(x_t) \preceq (1 - \lambda_m(P^{-1}(x_t))) P^{-1}(x_t)$. 
It follows that 
\begin{align}\label{pf-use-kkk-ggg}
   \left[ \begin{smallmatrix} -P^{-1}(x_t) & A_{cl}^\top(x_t) P^{-1}(x_t) \\ * & -(1-\lambda_m(P^{-1}(x_t)))P^{-1}(x_t)
\end{smallmatrix} \right] \preceq 0
\end{align}
Next, since $P(x_t) - I$ is nonsingular and $P(x_t)\succeq I$, then $P(x_t) \succ I$, and thus
\begin{align}\label{pf-ssjsjshs}
    \lambda_m(P^{-1}(x_t)) = \lambda_M^{-1} (P(x_t)) < 1.
\end{align}
Hence, $(1-\lambda_m(P^{-1}(x_t)))P^{-1}(x_t)$ is invertible. Then, by Schur complement, \eqref{pf-use-kkk-ggg} implies  
$-P^{-1}(x_t) + A_{cl}^\top(x_t) P^{-1}(x_t) (1-\lambda_m(P^{-1}(x_t)))^{-1}\cdot P(x_t)  (\star)^\top \preceq 0$. 
Multiplying the inequality by $(1-\lambda_m(P^{-1}(x_t))$, we have
$(1-\lambda_m(P^{-1}(x_t))) P^{-1}(x_t) - A_{cl}^\top(x_t) P^{-1}(x_t)A_{cl}(x_t) \succeq 0$, which gives \eqref{decre-geo-lyap}. This completes the proof.  
\qedp

\subsection{A recursive feasibility condition for input affine systems with state dependent input vector fields}\label{appdx:rf:Bxcase}

Consider the system \eqref{n-sys}. 
Let $B(x)$ be partitioned as $B(x) = \bar B + \hat{B}(x)$ with $\hat{B}: \R^n \rightarrow \R^{n \times m}$ a matrix-valued function. 
Proposition \ref{prop:offline:lmi:rf} extends to this case as follows.

\begin{cor}
Let $\Delta \in \R^{n \times r_1}$, $\Delta_B \in \R^{n \times r_2}$ be known matrices and $\mathcal{X} \subseteq \R^n$ a compact set including the origin such that 
\begin{align}\label{hatA-strength-condition-hatA-hatB}
\hat{A}(x) \hat{A}(x)^\top \preceq \Delta \Delta^\top , 
\hat{B}(x) \hat{B}(x)^\top \preceq \Delta_B \Delta_B^\top, 
\forall x \in \mathcal{X}. 
\end{align} 
Consider the following LMI in the decision variables $\epsilon_1 > 0$, $\epsilon_2 > 0$, $P \in \mathbb{S}^{n\times n}$, and $Y \in \R^{m \times n}$: 
\begin{align}\label{converge-lmi-state-dependent-B}
\begin{bmatrix}
    P - I - \epsilon_1 \Delta \Delta^\top - \epsilon_2 \Delta_B \Delta_B^\top & \bar{A} P + \bar B Y & 0 & 0 \\[.2em] (\bar{A} P + \bar B Y)^\top & P & Y^\top & P \\[.2em] 
    0 & Y & \epsilon_2 I & 0 \\ 0 & P & 0 & \epsilon_1 I \end{bmatrix} \succeq 0
\end{align}
Suppose \eqref{converge-lmi-state-dependent-B} is feasible and let $(\epsilon_1, \epsilon_2, P, Y)$ be any feasible solution. Then, for any $x_0 \in \mathcal{R}_\alpha := \{x\in \R^n: x^\top P^{-1} x \le \alpha, \alpha > 0\}$ contained in $\mathcal{X}$, the time-varying SDP \eqref{n-sdp} with 
\[
\bar \gamma \ge \operatorname{tr}(P) + 
\operatorname{tr}(Y P^{-1} Y^\top)
\]
is feasible for all $t \ge 0$. \qeds
\end{cor}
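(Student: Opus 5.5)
The plan is to mirror the proof of Proposition \ref{prop:offline:lmi:rf}, with one Petersen-lemma step per uncertainty channel. The two channels are $\hat A(x)P$ and $\hat B(x)Y$. The target inequality is, for every $x\in\mathcal{X}$,
\[
\begin{bmatrix} P-I & (\bar A+\hat A(x))P+(\bar B+\hat B(x))Y \\ * & P\end{bmatrix}\succeq 0 .
\]
This is exactly \eqref{n-sdp-stab1} with $A(x)=\bar A+\hat A(x)$ and $B(x)=\bar B+\hat B(x)$, evaluated at the constant pair $(P,Y)$.

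\textbf{Step 1: Schur complement on the two $\epsilon$-blocks.} Take the Schur complement of \eqref{converge-lmi-state-dependent-B} with respect to $\diag(\epsilon_2 I,\epsilon_1 I)$. This gives
\[
\begin{bmatrix} P-I & \bar AP+\bar BY\\ * & P\end{bmatrix}
-\epsilon_1\begin{bmatrix}-I\\0\end{bmatrix}\Delta\Delta^\top(\star)^\top
-\epsilon_1^{-1}\begin{bmatrix}0\\P\end{bmatrix}(\star)^\top
-\epsilon_2\begin{bmatrix}-I\\0\end{bmatrix}\Delta_B\Delta_B^\top(\star)^\top
-\epsilon_2^{-1}\begin{bmatrix}0\\Y^\top\end{bmatrix}(\star)^\top\succeq 0 .
\]

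\textbf{Step 2: absorb the $\hat A$ term.} By Young's inequality, for any $\hat A$ with $\hat A\hat A^\top\preceq\Delta\Delta^\top$,
\[
\begin{bmatrix}-I\\0\end{bmatrix}\hat A\begin{bmatrix}0&P\end{bmatrix}+(\star)^\top \preceq \epsilon_1\begin{bmatrix}-I\\0\end{bmatrix}\Delta\Delta^\top(\star)^\top+\epsilon_1^{-1}\begin{bmatrix}0\\P\end{bmatrix}(\star)^\top .
\]
This is precisely the content of the nonstrict Petersen lemma \cite[Fact 2]{bisoffi2022pertersen} used in Proposition \ref{prop:offline:lmi:rf}.

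\textbf{Step 3: absorb the $\hat B$ term.} Apply the same bound with $\hat B(x)$, $\Delta_B$, $\epsilon_2$ and the right factor $\begin{bmatrix}0&Y^\top\end{bmatrix}^\top$. Adding the two estimates yields the target inequality for all $x\in\mathcal{X}$. Because the two uncertainty terms are treated independently, the separate bounds in \eqref{hatA-strength-condition-hatA-hatB} suffice, and no joint structure is needed.

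\textbf{Step 4: Lyapunov decrease.} A Schur complement gives $P\succeq I$ and $A_{cl}PA_{cl}^\top-P+I\preceq0$ with $K=YP^{-1}$. As in Proposition \ref{redef-sdp-property}, this implies $V(x^+)<V(x)$ for $x\neq0$, where $V(x)=x^\top P^{-1}x$. The sublevel set $\mathcal{R}_\alpha\subseteq\mathcal{X}$ is therefore forward invariant, so $x_t\in\mathcal{X}$ for all $t$ whenever $x_0\in\mathcal{R}_\alpha$.

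\textbf{Step 5: feasibility of \eqref{n-sdp} at every $t$.} Use the constant candidate $Y(x_t)=Y$, $P(x_t)=P$, $L(x_t)=YP^{-1}Y^\top$, and $\gamma(x_t)=\operatorname{tr}P+\operatorname{tr}(YP^{-1}Y^\top)\le\bar\gamma$.
\begin{itemize}
\item \eqref{n-sdp-stab1} holds by Steps 1--3, since $x_t\in\mathcal{X}$.
\item \eqref{n-sdp-perf1} holds by a Schur complement.
\item \eqref{n-sdp-perf2} and \eqref{n-sdp-stab4} hold by the choice of $\gamma(x_t)$ and $\bar\gamma$.
\item \eqref{n-sdp-stab3} reduces to $x_t^\top P^{-1}(x_{t-1})x_t\ge x_t^\top P^{-1}x_t$.
\end{itemize}

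\textbf{Main obstacle.} Step 5 exposes a subtlety that I expect to be the main difficulty. The SDP uses the \emph{optimal} $P(x_{t-1})$, not the candidate $P$. The constraint \eqref{n-sdp-stab3} only needs to hold for the actual optimizer sequence, so the induction cannot simply assume the candidate was chosen at the previous step.

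I would handle this as follows. The optimizer $P(x_{t-1})$ is feasible at time $t-1$, so $\gamma(x_{t-1})\le\bar\gamma$ and $x_{t-1}$ evolves by that optimal closed loop. I would first try to show that, for the candidate, \eqref{n-sdp-stab3} holds whenever $P\preceq P(x_{t-1})$ along $x_t$. If that fails, I would follow the paper's convention in Proposition \ref{prop:offline:lmi:rf}. That convention establishes feasibility for the closed loop driven by the constant gain, where the candidate is its own predecessor and \eqref{n-sdp-stab3} holds with equality. The invariance of $\mathcal{R}_\alpha$ then closes the induction.
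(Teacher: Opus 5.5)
Your Steps 1--4 and the constant candidate of Step 5 follow the paper's route exactly. The paper presents the corollary as the extension of the proof of Proposition~\ref{prop:offline:lmi:rf}: a Schur complement on the $\epsilon$-blocks, one uncertainty bound per channel, and the constant tuple $(Y,P,YP^{-1}Y^\top,\operatorname{tr}P+\operatorname{tr}(YP^{-1}Y^\top))$. These steps are correct. The direction of the nonstrict Petersen lemma used there is just Young's inequality, so bounding $\hat A(x)P$ and $\hat B(x)Y$ separately is legitimate.

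The obstacle you flag is a genuine gap, and the paper shares it. Its proof of Proposition~\ref{prop:offline:lmi:rf} simply asserts that the constant tuple is feasible for all $t$, i.e.\ it adopts your ``convention.'' Your proposed repair does not work, however. For the candidate, \eqref{n-sdp-stab3} reads $x_t^\top P^{-1}(x_{t-1})x_t\ge x_t^\top P^{-1}x_t$. That inequality follows from $P(x_{t-1})\preceq P$, not from $P\preceq P(x_{t-1})$, which gives the opposite inequality. Moreover, no ordering between the LMI's $P$ and the trace-minimizing predecessor can be derived from the hypotheses.

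Concretely, take $n=m=1$, $A\equiv B\equiv 1$, $\Delta=\Delta_B=0$. The LMI \eqref{converge-lmi-state-dependent-B} holds with $P=1$, $Y=-1$, $\epsilon_1=\epsilon_2=2$. The optimizer of \eqref{n-lq} at $x_0$ has $P(x_0)=(1-\phi^{-4})^{-1}\approx 1.17$, where $\phi=(1+\sqrt5)/2$, and it gives $x_1=\phi^{-2}x_0$. So your condition $P\preceq P(x_0)$ holds, yet the candidate violates \eqref{n-sdp-stab3} at $t=1$ for every $x_0\neq 0$. The SDP is still feasible here, just not via your candidate.

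There is a second point you did not flag. The forward invariance of $\mathcal{R}_\alpha$ in Step 4 holds for the loop driven by the constant gain $YP^{-1}$. It does not hold for the loop driven by the optimal gains $Y(x_t)P^{-1}(x_t)$, which are what actually generate $x_t$. Hence your bullet ``\eqref{n-sdp-stab1} holds since $x_t\in\mathcal{X}$'' is also justified only under the convention.

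Under the convention, your argument proves exactly what the paper's argument proves. Feasibility of \eqref{n-sdp} along the trajectory produced by its own optimizers requires an additional idea that neither your proposal nor the paper supplies.
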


\addtolength{\textheight}{-10cm}   


\bibliographystyle{IEEEtran}
\bibliography{mybib}

\end{document}